%%% edited bt Takis 09/06/13

\documentclass[11pt,twoside]{article}
\usepackage{latexsym}
\usepackage{amssymb,amsbsy,amsmath,amsfonts,amssymb,amscd,amsthm}
\usepackage{graphicx}
\usepackage{hyperref}
\usepackage{color}
\setlength{\oddsidemargin}{0mm}
\setlength{\evensidemargin}{0mm}
\setlength{\topmargin}{5mm}
\setlength{\textheight}{22cm}
\setlength{\textwidth}{17cm}
\newcommand{\commentout}[1]{}

\newcommand{\fer}[1]{(\ref{#1})}

\newcommand{\R}{\mathbb{R}}

\newcommand {\sgn} { {\rm sgn} }
\newcommand {\dv}  { {\rm div} }

\newcommand {\cad} { {\cal D} }

\newcommand {\f}   {\frac}

\newcommand{\beq}{\begin{equation}}
\newcommand{\beqa}{\begin{eqnarray}}
\newcommand{\bea} {\begin{array}{ll}}
\newcommand{\beqan}{\begin{eqnarray*}}
\newcommand{\eeq}{\end{equation}}
\newcommand{\eeqa}{\end{eqnarray}}
\newcommand{\eeqan}{\end{eqnarray*}}
\newcommand{\eea} {\end{array}}
\newtheorem{theorem}{Theorem}[section]

\newtheorem{definition}[theorem]{Definition}

\newtheorem{proposition}[theorem]{Proposition}

%%%%%%%%%%%%%%%%%%%%%%%
\newcommand{\cqfd}{{ \hfill
                       {\unskip\kern 6pt\penalty 500
                       \raise -2pt\hbox{\vrule\vbox to 6pt{\hrule width 6pt
                       \vfill\hrule}\vrule} \par}   }}
%%%%%%%%%%%%%%%%%%%%%%%

\title{\Large \bf Scalar conservation laws with rough (stochastic) fluxes}
% and stochastic averaging lemmas.}

\author{ Pierre-Louis Lions$^{1}$, Beno\^ \i t Perthame$^{2}$ and Panagiotis E. Souganidis$^{3,4}$}
\date{\today}

\begin{document}
\maketitle
\vspace*{1.0cm}
%\tableofcontents
\pagenumbering{arabic}

\begin{abstract}

\noindent We develop a pathwise theory for scalar conservation laws with quasilinear multiplicative rough path dependence, a special case being stochastic conservation laws with quasilinear stochastic dependence. We introduce the notion of pathwise stochastic entropy solutions, which is closed with the local uniform limits of paths,  and prove that it is well posed, i.e., we establish existence, uniqueness and continuous dependence, in the form of
pathwise $L^1$-contraction, as well as 
some explicit estimates. Our approach is motivated by  the theory of stochastic viscosity solutions, which was introduced and developed 
by two of the authors, to study fully nonlinear first- and second-order stochastic pde with multiplicative noise. This  theory 
relies on special test functions constructed by inverting locally the flow of the stochastic characteristics. For conservation laws this is best implemented at the level of the kinetic formulation which we follow here.

\end{abstract}
\pagestyle{plain} \vspace*{1.0cm}

\noindent {\bf Key words.}  Stochastic differential equations; stochastic conservation laws; stochastic entropy condition; kinetic formulation; dissipative solutions; rough paths.
\\
\noindent {\bf AMS Class. Numbers.} 35L65; 35R60; 60H15; 81S30

%%%%%%%%%%%%%%%%%%%%%%%%%%%%%%%%%%%%%%%%%%%%
\section{Introduction}
\label{sec:intro}
%-------------------------------------------
%%%%%%%%%%%%%%%%%%%%%%%%%%%%%%%%%%%%%%%%%%%%

We are putting forward a theory of pathwise weak solutions for scalar conservation laws in $\R^N$ with quasilinear multiplicative rough path dependence. Since a special case is scalar conservation laws with quasilinear multiplicative stochastic dependence,  we call the equations we consider and their weak solutions stochastic scalar conservation laws (SSCL for short) and stochastic entropy solutions respectively.
\smallskip

\noindent Our approach is based  on the concepts and methods introduced by Lions and Souganidis  \cite{LShjscras98, LShjscras98b} and extended by the same authors in \cite{LShjs,LSsemilinear,LShjscras2000b,LShjscrasEDP} for the theory of pathwise stochastic viscosity solution of fully nonlinear first- and second-order stochastic pde including stochastic Hamilton-Jacobi equations. One of the fundamental tools of this theory is the class of test functions constructed by inverting locally, and at the level of test functions, the flow of the characteristics corresponding to the stochastic first-order part of the equation and smooth initial data. Such approach is best implemented for conservation laws using the kinetic formulation which we follow here.

\smallskip

\noindent Let
\beq\label{flux}
{\bf{A}} =(A_1,...,A_N) \in C^2(\R;\R^N)
\eeq
 be the flux and consider a continuous path
\beq\label{path1}
{\bf W }=(W^1,...,W^N) \in C([0,\infty);\R^N),
\eeq
a special case being
\beq\label{path2}
{\bf{W}}=(W^1,...,W^N) \quad \text{ is an $N$-dimensional Brownian motion.}
\eeq

%\smallskip

\noindent We are interested in the SSCL

\begin{equation}\label{scl}
\begin{cases}
du + \displaystyle \sum_{i=1}^{N} (A_i(u))_{x_i} \circ dW^i = 0 \quad \text{ in } \quad \R^N\times(0,\infty), \\[2mm]
u=u^0 \quad \text{ on } \quad \R^N\times\{0\}.
\end{cases}
\end{equation}

\smallskip

\noindent Throughout the paper we adopt the notation and terminology of stochastic calculus. In general $du$ denotes some kind of time differential, while in the case of \fer{path2} is the usual stochastic differential. Similarly in the general setting $\circ$ does not have any particular meaning and can be ignored, while in the stochastic setting %$\circ$ 
it denotes the Stratonovich differential. The need to use the latter stems from the fact that we are developing a theory which is closed (stable) on paths in the local uniform topology. It is well known, even in the context of stochastic differential equations, that ordinary differential equations with time dependence converging local uniformly to Brownian paths give rise to sde's the Stratonovich differential. That
a pathwise theory is more appropriate to study \eqref{scl} is also justified from the fact that in the actual stochastic case, taking expectations leads, in view of the properties of the Ito calculus, to terms that are not possible to handle by the available estimates. We discuss this issue further in the last section of this paper.

\smallskip

\noindent If we are using a single rough path, i.e., if $W=W^i$ for all $i=1,...,N$, then \fer{scl}
becomes

\begin{equation}\label{scl1}
\begin{cases}
du + \dv {\bf{A}}(u) \circ dW = 0 \  \text{ in } \ \R^N\times(0,\infty), \\[2mm]
u=u^0 \quad \text{ on } \ \R^N\times\{0\}.
\end{cases}
\end{equation}

\smallskip

\noindent In the case of \eqref{scl1} even for a smooth path it is tempting to say that $u(x,t)=v(x, W(t))$ where $v$ solves the time homogenous problem
\begin{equation}\label{scl100}
\begin{cases}
 v_t + \dv {\bf{A}}(v)=0 \  \text{ in } \ \R^N\times(0,\infty), \\[2mm]
u=u^0 \quad \text{ on } \ \R^N\times\{0\}.
\end{cases}
\end{equation}

\smallskip

\noindent Indeed  a formal calculation implies this fact, which, of course, cannot be true since the solutions of  \eqref{scl100} develop shocks, which are not time-reversible, and the $W$ is not a monotone.

\smallskip

\noindent It is worth reminding the reader the connection between stochastic Hamilton-Jacobi and conservation laws for $N=1$. Indeed consider the stochastic Hamilton-Jacobi equation

\begin{equation}\label{shj}
\begin{cases}
dv +  \displaystyle \sum_{i=1}^{N} A_i(v_x) \circ dW^i = 0 \quad \text{ in } \quad \R \times(0,\infty), \\[2mm]
v=v^0 \quad \text{ on } \quad \R^N\times\{0\}.
\end{cases}
\end{equation}
and observe that it is immediate at least formally (actually this can be made rigorous but leave the details to the reader) that
$u=v_x$ solves the SSCL \fer{scl}.

\smallskip

\noindent It may be  possible to use less regular fluxes by increasing the regularity of the paths but will leave this for future investigation. Such an analysis has been performed in detail for stochastic viscosity solutions. It is also possible to consider $x$-dependent fluxes (we plan to pursue this problem in a future work), i.e., initial value problems of the form

\begin{equation}\label{xscl}
\begin{cases}
du + \displaystyle \sum_{i=1}^{N} (A_i(x,u))_{x_i} \circ dW^i = 0 \quad \text{ in } \quad \R^N\times(0,\infty), \\[2mm]
u=u^0 \quad \text{ on } \quad \R^N\times\{0\}.
\end{cases}
\end{equation}

\smallskip

\noindent If, instead of \fer{path1}, we assume that ${\bf W} \in C^1((0,\infty);\R^N)$, then \fer{scl} is a ``classical'' problem with a well known theory, see, for example, the books by Dafermos \cite{DafBook} and Serre \cite{SerreBook}. The solution can develop singularities in the form of shocks (discontinuities). Hence it is necessary to consider entropy solutions which, although not regular, satisfy the $L^1$ -contraction property established by Kruzkov that yields  uniqueness.

\smallskip

\noindent The goal here is to give a sense to these concepts for general rough paths ${\bf W}$. We base the theory on the kinetic formulation which we review in Section \ref{sec:kf}. Then, following the ideas introduced in \cite{LShjscras98, LShjscras98b} for stochastic Hamilton-Jacobi equations, we define the notion of pathwise stochastic entropy solutions and state our main stability and intrinsic uniqueness result (pathwise $L^1$-contraction) in Section \ref{sec:wsol}. The proofs are presented in Sections \ref{sec:reg} and \ref{sec:conc}.
We also introduce without proofs the notion of pathwise dissipative solutions.
%In Section \ref{sec:semilinear} we discuss the semilinear problem.

\smallskip

\noindent Our interest to study SCCL is twofold. Given the theory of stochastic viscosity solutions and the connection between conservation laws and Hamilton-Jacobi equations, it is very natural from the mathematical point of view to ask whether there is such a theory for the former. In addition, SCCL like \eqref{scl} arise as models in the theory of mean field games developed by Lasry and Lions (see \cite{ll1}, \cite{ll2}, \cite{ll3}). Next we discuss a concrete example.

\smallskip

\noindent Consider, for $i=1,...,L$, the system of stochastic differential equations
$$
dX^i_t = \sigma (X^i_t, \frac{1}{L-1} \sum_{j\not=i} \delta_{X^j_t}) \circ d{\bf W}_t,
$$
where $\delta_y$ denotes the Dirac mass at $y, {\bf{W}}=(W^1,...,W^N)$ is a $N$-dimensional Brownian motion and the $N \times N$ matrix $\sigma$ is Lipschitz continuous on $\R^N \times P(\R^N)$,  where  $P(X)$ is the space of probability measures on the metric space $X$ endowed with the usual $2$-Wasserstein metric.

\smallskip

\noindent It turns out that, as $L\to\infty$,  the law, $\text{Law} (X^1_t,.,.,.,X^L_t)$, of the $(X^1_t,.,.,.,X^L_t)$ converges in the sense of measures to some $\pi_t \in P(P(\R^N))$, which evolves in time according to

$$
\int U(m)d\pi_t(m) = E[U(m_t)] \ \text{ for all } \ U\in C(P(\R^N)),
$$
where $E$ is the expectation with respect to the probability space associated with ${ \bf W}$ and, for $\sigma^*$  the adjoint of $\sigma$,  $m_t$ solves the SSCL

$$
dm= -\sum_{i=1}^{N}(\sigma^* (x,m)m)_{x_i} \circ dW^i .
$$

\smallskip

\noindent Solutions of deterministic non-degenerate conservation laws have  remarkable regularizing effects in Sobolev spaces of low order. It is an interesting question to see if they are still true in the present case. This is certainly possible with somehow different exponents in view of the case of kinetic equations where stochastic averaging lemmas are established \cite{LPSstav}. 

\smallskip

\noindent It is natural to ask whether the approach we are putting forward for \eqref{scl1} also applies
to SSCL with semilinear rough path dependence like
\begin{equation}\label{scl2}
\begin{cases}
du + \displaystyle\sum_{i=1}^{N} (A_i(u))_{x_i} \circ dW^i  = {\bf \Phi}(u) \circ d {\bf {\tilde W}} \quad \text{ in } \quad \R^N\times(0,\infty), \\[2mm]
u=u^0 \quad \text{ on } \quad \R^N\times\{0\},
\end{cases}
\end{equation}
for  ${\bf \Phi}=(\Phi_1,...,\Phi_m) \in C^2(\R;\R^m)$ and  another m-dimensional path ${\bf {\tilde W}}=({\tilde W}^1,...,{\tilde W}^m)$.
%and
%$${\bf \Phi}(u) \circ d {\bf {\tilde W}}=\sum_{j=1}^{m}\Phi_j \circ d{\tilde W}_j .$$

\smallskip

\noindent Recently Debussche and Vovelle \cite{dv} (see also Feng and Nualart \cite{fn} and Chen, Ding and Karlsen \cite{cdk}, Debussche, Hofmanova and Vovelle \cite{DHV}, Hofmanova \cite{Hof1, Hof2}) put forward a theory of weak entropy solutions of scalar conservation laws with Ito-type semilinear (but no stochastic quasilinear dependence), which in our setting take the form
\begin{equation}\label{scl22}
\begin{cases}
du + \displaystyle\sum_{i=1}^{N} (A_i(u))_{x_i} dt  = {\bf \Phi}(u) d {\bf {\tilde W}} \quad \text{ in } \quad \R^N\times(0,\infty), \\[2mm]
u=u^0 \quad \text{ on } \quad \R^N\times\{0\},
\end{cases}
\end{equation}
with $\bf Phi$ as above and ${\bf {\tilde W}}$ an $m$-dimensional Brownian motion.

\smallskip

\noindent The main results of \cite{dv,DHV,Hof1, Hof2}, which also rely on the kinetic theory, is a well posed theory of solutions which satisfy a contraction principle in space and probability. In Section \ref{sec:semilinear} we explain the problem, we present a specific example showing that the approach taken in this paper cannot be used in the semilinear setting, and we discuss why the properties of the stochastic integral yield that a pathwise approach is not possible for \eqref{scl22}.

\smallskip

\noindent It remains an interesting open question to study the well posedness of \eqref{scl1} in view of the fact that they approaches
for \eqref{scl} and \eqref{scl2} appear to be incompatible.

%{\bf Benoit: Should we add something about the stochastic averaging? If yes can please include a paragraph or two in a short section 7?}

%to the contrary and we also explain why it is not possible for \eqref{scl2} to have solutions satisfying a pathwise %contraction.

%%%%%%%%%%%%%%%%%%%%%%%%%%%%%%%%%%%%%%%%%%%%
\section{Review of the kinetic formulation}
\label{sec:kf}
%-------------------------------------------
%%%%%%%%%%%%%%%%%%%%%%%%%%%%%%%%%%%%%%%%%%%%

We review here the basic concepts of the kinetic theory of scalar conservation laws. We are going to show that it allows us to define a global change of variable along the ``kinetic'' characteristics, a very convenient tool for our purpose. Recall that for the conservation laws in the physical space the characteristics are only defined for short times (before crossing) and the method is not so convenient. Such a conclusion was also drawn in \cite{dv} but for a different reason. There the kinetic setting keeps better track of the entropy dissipation (due to the noise).

\smallskip

\noindent Although we use the notation of the Introduction, throughout the discussion in this section 
we assume that
\beq\label{path3}
{\bf W} \in C^1((0,\infty);\R^N),
\eeq
in which case $du$ stands for the usual derivative and $\circ$ is the usual multiplication and, hence, should be ignored.

\smallskip

\noindent The entropy inequality (see \cite{DafBook, SerreBook}), which guarantees the uniqueness of the weak solutions, is
\beq
\left\{\begin{array}{l}
dS(u) + \displaystyle\sum_{i=1}^{N} (A_i^S(u))_{x_i} \circ dW^i   \leq 0  \quad \text{ in } \quad \R^N\times(0,\infty),
\\ \\
S( u)=S(u^0) \quad \text{ on } \quad \R^N\times\{0\},
\end{array} \right.
\label{eq:rsentr}
\eeq
for all $C^2$  convex functions $S$ and entropy fluxes $A^S$ defined by
$$
\left( {\bf A^S}(u)\right)' = {\bf a}(u) S'(u) \quad \text{ for } \quad
%$$
%where
%$$
{\bf a} = {\bf A}'.$$

\smallskip

\noindent In view of these inequalities, it appears that a pathwise theory is more appropriate to study \eqref{scl}. Indeed it easily follows, when the paths are smooth, that \eqref{scl} satisfies an $L^1(\R^N)$-contraction property. If in the stochastic setting, i.e., when ${\bf W}$ is actually a Brownian motion, we wanted a theory involving expectations, then Ito-calculus
creates terms -- for simplicity here we take $N=1$-- of the form 
$$
E(\int_{0}^{t}S''(u)(f'(u))^2(u_x)^2 dt,
$$  
which cannot be handled due to the lack of appropriate estimates.

\smallskip

\noindent It is by now well established that the simplest way to handle conservation laws is through their kinetic formulation developed through a series of papers -- see Perthame and Tadmor \cite{PTscal}, Lions, Perthame and Tadmor \cite{LPTscal}, Perthame \cite{Peuniq, PeKF}, and Lions, Perthame and Souganidis \cite{LPS}. The basic idea is to write a linear equation on the nonlinear function

\beq
\chi(x,\xi, t) = \chi (u(x,t), \xi)=   \left\{\begin{array}{l}
+1 \quad \text{ if } \quad 0 \leq \xi \leq u(x,t),
\\[2mm]
-1  \quad \text{ if } \quad u(x,t) \leq \xi \leq 0,
\\[2mm]
\; 0 \quad \text{ otherwise}.
\end{array} \right.
\label{eq:chi}
\eeq

\smallskip

\noindent The kinetic formulation states that using the entropy inequalities (\ref{eq:rsentr}) for all convex entropies $S$ is equivalent  to  $\chi$ solving, in the sense of distributions,

\beq \left\{\begin{array}{l}
d \chi + \displaystyle  \sum_{i=1}^N a_i(\xi)\chi_{x_i} \circ dW^i   = m_{\xi} dt   \  \text{ in }  \  \R^N \times \R \times (0,\infty),
\\ \\
\chi =\chi(u^0(\cdot), \cdot)  \  \text{ on }  \  \R^N\times\R\times\{0\},
\end{array} \right.
\label{eq:skf}
\eeq
where
\beq\label{measure}
m \  \text{ is a nonnegative bounded measure in } \  \R^N \times \R \times (0,\infty).
\eeq

\smallskip

\noindent One direction of this equivalence can be seen, at least formally, easily. Indeed since, for all $(x,t)\in \R^N\times(0,\infty)$,
$$
S\big(u(x,t) \big) -S(0)= \int S'(\xi) \chi \big(u(x,t), \xi \big) d\xi,
$$
multiplying \fer{eq:skf} by $S'(\xi)$ and integrating in $\xi$ leads to  \fer{eq:rsentr}. For the converse see \cite{LPTscal,Peuniq,PeKF}.

\smallskip

\noindent We recall next some basic estimates from the kinetic theory which hold for smooth paths and are, actually, independent of the paths. They are the usual $L^p(\R^N \times (0,T))$ and $BV(\R^N \times (0,T))$ bounds (for all $T>0$) for the solutions, as well as the bounds on the kinetic defect measures $m$, which imply that the latter %defect measures 
are weakly continuous in $\xi$ as measures on $\R^N\times(0,\infty)$.

\smallskip

\noindent We summarize these properties  in the next two propositions which we state without proof.
%-----------------------------------
\begin{proposition} Assume \fer{flux} and \fer{path3}. The entropy solutions to \fer{scl} satisfy, for all $t>0$,
\beq\label{reg1}
\| u (\cdot,t) \|_{L^p(\R^N)} \leq \| u^0  \|_{L^p(\R^N)}  \quad \text{ for all } \quad p \in [1,\infty],
\eeq
and
\beq\label{reg2}
\| Du (\cdot,t) \|_{L^1(\R^N)} \leq \| Du^0  \|_{L^1(\R^N)}.
\eeq
\label{prop:reg}
\end{proposition}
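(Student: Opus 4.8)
Since ${\bf W} \in C^1$, problem \eqref{scl} is a classical scalar conservation law with the $x$-independent, time-dependent flux $\sum_i A_i(u)\,\dot W^i(t)$, so the Kruzhkov entropy theory and the kinetic formulation \eqref{eq:skf} apply without change. The plan is to read off both bounds from these standard tools, the only effect of the path being that each $\dot W^i(t)$ enters as a bounded coefficient whose sign plays no role in the arguments below.

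For \eqref{reg1} I would integrate the entropy inequality \eqref{eq:rsentr} against a convex entropy. For a $C^2$ convex $S$, integrating \eqref{eq:rsentr} over $\R^N$ kills every flux contribution $\int_{\R^N}(A_i^S(u))_{x_i}\,dx$, being the integral of a divergence, so that $\frac{d}{dt}\int_{\R^N} S(u(\cdot,t))\,dx \leq 0$. Equivalently, at the kinetic level one multiplies \eqref{eq:skf} by $S'(\xi)$ and integrates in $x$ and $\xi$: the right-hand side becomes $-\int\!\!\int S''(\xi)\,m\,d\xi\,dx \leq 0$ since $m \geq 0$ and $S'' \geq 0$. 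Choosing $S(u)=|u|^p$, approximated by $C^2$ convex functions when necessary, gives \eqref{reg1} for every $p \in [1,\infty)$, and letting $p \to \infty$ yields the case $p=\infty$.

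For \eqref{reg2} I would use the translation invariance of the flux. Because the $A_i$ do not depend on $x$, every spatial shift $u(\cdot+h,\cdot)$ of an entropy solution is again an entropy solution, with datum $u^0(\cdot+h)$. Applying the $L^1$-contraction property — which holds for smooth paths and follows from the Kruzhkov doubling of variables — to $u(\cdot,t)$ and $u(\cdot+h,t)$ gives
$$\|u(\cdot+h,t)-u(\cdot,t)\|_{L^1(\R^N)} \leq \|u^0(\cdot+h)-u^0\|_{L^1(\R^N)};$$
dividing by $|h|$ and letting $h \to 0$ along each coordinate direction produces \eqref{reg2}.

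The routine calculations above conceal two points deserving care: the entropy computation for the non-smooth choices $S(u)=|u|$ and the passage $p\to\infty$, which I would handle through smooth convex approximations at the level of the measure $m$; and, more substantially, the $L^1$-contraction underpinning the $BV$ bound, whose doubling-of-variables proof is the real technical core and must accommodate the sign-changing coefficient $\dot W^i(t)$. Since all of this is classical for $C^1$ paths, I would present it as a direct consequence of the standard theory and the kinetic formulation just reviewed, saving the genuinely new pathwise constructions for the rough-path case.
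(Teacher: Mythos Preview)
Your argument is correct and is exactly the standard route: decay of $\int S(u)$ from the entropy inequality (or equivalently from $m\geq 0$ in the kinetic formulation) for \eqref{reg1}, and translation invariance plus the $L^1$-contraction for \eqref{reg2}. The paper itself does not give a proof of this proposition---it explicitly states Propositions~\ref{prop:reg} and \ref{prop:kf} ``without proof'' as well-known facts from the classical theory---so there is nothing to compare against, and your sketch is precisely the kind of justification the paper is deferring to.
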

%-----------------------------------
\begin{proposition} Assume \fer{flux} and \fer{path3}. Then entropy solutions to \fer{scl} satisfy, for all $t>0$,
$$
|\xi|\leq \ |u|\leq \|u^0\|_\infty \quad \text{ in } \quad \{(x,\xi,t)\in \R^n\times\R\times(0,\infty):|\chi(x,\xi,t) > 0 \},
$$
\beq\label{kf1}
\int_{0}^\infty  \int_{\R^{N}} \int_{\R} m(x,\xi,t) dx \; d\xi\; dt  \leq \f 1 2 \| u^0 \|^2_{L^2(\R^N)},
\eeq
\beq\label{kf2}
\int_{0}^\infty  \int_{\R^{N}} m(x,\xi,t)  dx \; dt  \leq \| u^0  \|_{L^1(\R^N)}  \quad \text{ for all } \quad  \xi  \in  \R,
\eeq
and, for all smooth test functions $\psi$,
\beq\label{kf3}
\f d {d\xi} \int_{0}^\infty  \int_{\R^{N}} \psi(x,t) m(x,\xi,t)  dx \; dt  \leq \left[ \| D_{x,t} \psi\|_{L^\infty(\R^{N+1})}+ \| \psi(\cdot,0) \|_{L^\infty(\R^{N})} \right] \| u^0  \|_{L^1(\R^N)}.
\eeq
\label{prop:kf}
\end{proposition}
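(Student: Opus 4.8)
The plan is to obtain all four assertions as classical consequences of the kinetic formulation \fer{eq:skf}, by testing that equation (equivalently, the entropy form \fer{eq:rsentr}) against well-chosen functions of $(x,\xi,t)$ and exploiting three structural facts: that $m\ge 0$ and, by the first assertion, is supported in the bounded strip $\{|\xi|\le\|u^0\|_\infty\}$; that under \fer{path3} the kinetic velocity $a_i(\xi)\dot W^i$ is independent of $x$, so that $\int_{\R^N}\chi_{x_i}\,dx=0$ and every transport term drops upon integration in $x$; and the a priori bounds of Proposition \ref{prop:reg}. The support statement is immediate: if $|\chi(x,\xi,t)|>0$ then, by \fer{eq:chi}, $\xi$ lies between $0$ and $u(x,t)$, whence $|\xi|\le|u(x,t)|$, and \fer{reg1} with $p=\infty$ gives $|u(x,t)|\le\|u^0\|_\infty$.

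For \fer{kf1} I would use the quadratic entropy, i.e. multiply \fer{eq:skf} by $\xi$ and integrate in $d\xi\,dx$. Since $\int_\R \xi\,\chi(u,\xi)\,d\xi=\tfrac12u^2$ and the transport term integrates to zero, while $\int_\R \xi\,m_\xi\,d\xi=-\int_\R m\,d\xi$ by integration by parts (licit because $m$ has compact $\xi$-support), one gets $\f{d}{dt}\int_{\R^N}\tfrac12u^2\,dx=-\int_{\R^N}\int_\R m\,d\xi\,dx$. Integrating in $t$ and discarding the nonnegative terminal energy yields \fer{kf1}.

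For \fer{kf2} I would instead use the Kruzkov entropies. Multiplying \fer{eq:skf} by $\ind{\eta>\xi}$ and integrating in $\eta$ produces, for $\xi\ge 0$, the balance
\[
\p_t(u-\xi)_+ + \sum_i \p_{x_i}\!\big(\textstyle\int_\xi^\infty a_i(\eta)\,\chi\,d\eta\big)\dot W^i = -\,m(x,\xi,t),
\]
since $\int_\xi^\infty m_\eta\,d\eta=-m(\cdot,\xi,\cdot)$. Integrating in $x$ (the flux drops) and in $t$ gives $\int_0^\infty\int_{\R^N}m\,dx\,dt=\int_{\R^N}(u^0-\xi)_+\,dx-\int_{\R^N}(u(\cdot,\infty)-\xi)_+\,dx\le\int_{\R^N}(u^0)_+\,dx\le\|u^0\|_{L^1(\R^N)}$; the case $\xi\le 0$ is symmetric, using $(\xi-u)_+$.

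The delicate point, and the one I expect to be the main obstacle, is \fer{kf3}, which is really the quantitative form of the asserted weak continuity of $\xi\mapsto m$. Here I would test the same Kruzkov balance against $\psi(x,t)$, taken compactly supported in time so that the terminal boundary term vanishes, integrate by parts in $t$ and $x$, and then differentiate in $\xi$. This produces an identity whose right-hand side consists of an initial contribution $-\int_{\R^N}\psi(\cdot,0)\,\ind{u^0>\xi}\,dx$, a time-derivative term $-\int_0^\infty\int_{\R^N}\psi_t\,\ind{u>\xi}\,dx\,dt$, and a transport term $-\sum_i\int_0^\infty\int_{\R^N}a_i(\xi)\,\psi_{x_i}\,\chi\,\dot W^i\,dx\,dt$. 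Each of these is then to be controlled by $\|D_{x,t}\psi\|_{L^\infty}$ or $\|\psi(\cdot,0)\|_{L^\infty}$ against the conserved mass, using $\|u(\cdot,t)\|_{L^1}\le\|u^0\|_{L^1}$ from \fer{reg1}. The subtlety is that these bounds must be made uniform in $\xi$ down to $\xi=0$, where the level sets $\{u>\xi\}$ are largest, and reconciling the time integration against $\psi_t$ with this $\xi$-localization is exactly where care is needed; for the sharp constants I would appeal to the standard kinetic estimates of \cite{LPTscal, Peuniq, PeKF}.
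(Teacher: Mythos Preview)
The paper does not prove this proposition: it is explicitly ``state[d] without proof'' with implicit reference to the kinetic-formulation literature \cite{LPTscal, Peuniq, PeKF}. Your approach is exactly the standard one carried out in those references, and is correct: the support statement is immediate from the definition \fer{eq:chi} and the $L^\infty$ bound \fer{reg1}; \fer{kf1} is the quadratic entropy balance (multiply \fer{eq:skf} by $\xi$); \fer{kf2} is the Kruzkov entropy $(u-\xi)_+$ balance; and \fer{kf3} is obtained by testing \fer{eq:skf} directly against $\psi(x,t)$, so that $\int\psi\,\p_\xi m = \f{d}{d\xi}\int\psi\,m$, with the three terms you list on the right.

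One remark on the point you flag as delicate: the transport contribution in \fer{kf3} carries a factor $a_i(\xi)\dot W^i(t)$ that is not visible on the right-hand side as stated; this is to be read with constants depending on the flux and the (smooth) path absorbed, consistent with the paper's informal handling of these a priori bounds. Your instinct that the time integration against $\psi_t$ and $\psi_{x_i}$ must be reconciled with the size of $\{|u|>|\xi|\}$ is the right concern; in practice one takes $\psi$ compactly supported in $(x,t)$, so that the bound is local in time and the level-set measure is controlled by $\|u(\cdot,t)\|_{L^1}\le\|u^0\|_{L^1}$ via the obvious pointwise inequality $|\chi|\le 1$ on the support of $\psi$, exactly as in \cite{Peuniq, PeKF}.
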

%-----------------------------------

\smallskip

\noindent The next observation is the backbone for our approach for the SSCL. Its origin goes back to
\cite{LSsemilinear, LShjscras2000b, LShjscras98,LShjscras98b}, where similar arguments for stochastic Hamilton-Jacobi equations form the basis of the theory of stochastic viscosity solutions.

\smallskip

\noindent Since the flux in \fer{scl} is independent of $x$, we can use the characteristics associated with \fer{eq:skf} to derive an identity which is equivalent to solving \fer{eq:skf} in the sense of distributions. Indeed
consider
\beq\label{rho0}
\rho^0 \in \cad(\R^N) \quad \text{ such that} \quad \rho^0 \geq 0 \quad \text{ and } \quad \int_{\R^N} \rho^0(x)dx =1,
\eeq
and observe that
\beq
\rho(y, x,\xi,t)= \rho^0\big(y-x + {\bf a}(\xi){\bf W}(t)\big),
\label{eq:rho}
\eeq
where
\beq\label{rho12}
{\bf a}(\xi) {\bf W}(t):=(a_1(\xi)W^1(t), a_2(\xi)W^2(t),...,a_N(\xi)W^N(t)),
\eeq
solves the linear transport equation (recall that in this section we are assuming that ${\bf W}$ is smooth)
$$
d \rho  + \sum_{i=1}^{N} a_i(\xi)\rho_{x_i} \circ dW^i= 0 \quad \text{ in } \quad \R^N \times \R \times (0,\infty),
$$
and, hence,
\beq\label{rho1}
d( \rho(y, x,\xi,t) \chi(x,\xi,t))  + \sum_{i=1}^{N} a_i(\xi)(\rho (y,x,\xi,t)\chi(x,\xi,t))_{x_i} \circ dW^i  = \rho(y, x,\xi,t) m_{\xi}(x,\xi,t)dt.
\eeq

\smallskip

\noindent Integrating \fer{rho1} with respect to $x$ (recall that $\rho^0$ has compact support) yields that, in the sense of distributions in $\R\times(0,\infty)$,

\beq\label{rho11}
\f d {dt} \int_{\R^{N}} \chi(x,\xi, t) \rho(y, x,\xi,t) dx  = \int_{\R^{N}}  \rho(y, x,\xi,t) m_{\xi}(x,\xi,t) dx.
\eeq

\smallskip

\noindent We remark that although the regularity of the path was used to derive \fer{rho11} the actual conclusion does not need it. In particular \fer{rho11} holds for paths which are only continuous.
Notice also that \fer{rho11} is basically equivalent to the kinetic formulation if the measure $m$ satisfies \fer{measure}.

\smallskip

\noindent Finally we point out that \fer{rho11}  makes sense only after integrating with the respect to $\xi$ against a test function. This requires  that
${\bf a}'\in C^1(\R;\R^N)$ as long as we only use that $m$ is a measure. Indeed, integrating against a test function $\Psi$, we find
$$
\begin{array}{rl}
\int_{\R^{N+1}}&  \Psi(\xi) \rho(y, x,\xi,t) m_{\xi}(x,\xi,t) \ dx d\xi=
\\[2mm]
&=- \int_{\R^{N+1}} \Psi'(\xi)  \rho(y, x,\xi,t) \ m(x,\xi,t) \ dx d\xi\\[2mm]
& + \int_{\R^{N+1}} \Psi(\xi) (\sum_{i=1}^{N} \rho_{x_i}(y,x,\xi,t) a_i'(\xi)W^i(t)) \ m(x,\xi,t) \ dx d\xi
\end{array}
$$
and all the terms make sense as continuous functions tested against a measure.

\smallskip

\noindent We present next some (new) estimates and identities which are needed for the proof of the main results of the paper and are derived from \fer{rho11}. In the statement and later in the paper we write $\delta$ for the Dirac mass at the origin.

\smallskip

\noindent We have:
%---------------------------------------------------
\begin{proposition}\label{prop:new}
Assume  \fer{flux}, \fer{path3} and $ u^0 \in (L^1\cap L^\infty\cap BV)(\R^N)$. Then, for all $t>0$,
\beq
\f d {dt} \int_{\R^{N+1}} |\chi(x,\xi, t)| dx \; d\xi =-2 \int_{\R^{N}} m(x,0,t) dx,
\label{eq:unpr1}
\eeq
and
\beq
\begin{array}{rl}
\int_{\R^{N+1} }  \int_{\R^{2N}} \delta(\xi - u(z,t))\ \rho(y, z,\xi,t)  \rho(y, x,\xi,t) \ m(t,x,\xi) dx dy dz d\xi
\\[2mm]
=\f 1 2 \f{d}{dt}  \int_{\R^{N+1}}[\left( \int_{\R^{N}} \chi(x,\xi, t) \rho(y, x,\xi,t) dx\right)^2 -|\chi(y,\xi, t)| ] dy d\xi.
\end{array}
\label{eq:unpr2}
\eeq
\end{proposition}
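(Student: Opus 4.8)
The plan is to reduce both identities to the transport identity \eqref{rho11}, to the algebraic fact that $\partial_\xi\chi(u,\xi)=\delta(\xi)-\delta(\xi-u)$, and to the normalizations $\int_{\R^N}\rho^0=1$ and $|\chi|=\sgn(\xi)\,\chi$. Throughout I write $f(y,\xi,t):=\int_{\R^N}\chi(x,\xi,t)\,\rho(y,x,\xi,t)\,dx$, so that \eqref{rho11} reads $\partial_t f(y,\xi,t)=\int_{\R^N}\rho(y,x,\xi,t)\,m_\xi(x,\xi,t)\,dx$. For \eqref{eq:unpr1} I would first integrate the kinetic equation \eqref{eq:skf} in $x$; since $\chi(\cdot,\xi,t)$ decays (as $u^0\in L^1$) the flux term $\sum_i a_i(\xi)\int\chi_{x_i}\,dx$ vanishes, leaving $\frac{d}{dt}\int_{\R^N}\chi\,dx=\int_{\R^N}m_\xi\,dx$. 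Using $|\chi|=\sgn(\xi)\chi$, multiplying by $\sgn(\xi)$ and integrating in $\xi$ gives $\frac{d}{dt}\int_{\R^{N+1}}|\chi|\,dx\,d\xi=\int_{\R^{N+1}}\sgn(\xi)\,m_\xi\,dx\,d\xi$; an integration by parts in $\xi$, using $\frac{d}{d\xi}\sgn(\xi)=2\delta$ and the compact $\xi$-support of $m$ from Proposition~\ref{prop:kf} to kill the boundary terms, yields \eqref{eq:unpr1}.

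For \eqref{eq:unpr2} I would compute the right-hand side term by term. The $|\chi|$-contribution is handled by \eqref{eq:unpr1}, giving $-\frac12\frac{d}{dt}\int|\chi(y,\xi,t)|\,dy\,d\xi=\int_{\R^N}m(y,0,t)\,dy$. For the quadratic term, $\frac12\frac{d}{dt}\int f^2=\int f\,\partial_t f=\int f\big(\int\rho\,m_\xi\,dx\big)$, and after inserting the definition of $f$ in a fresh variable $z$ this becomes the quadruple integral $\int_{\R^{N+1}}\int_{\R^{2N}}\chi(z,\xi,t)\rho(y,z,\xi,t)\rho(y,x,\xi,t)\,m_\xi(x,\xi,t)\,dx\,dz\,dy\,d\xi$. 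Integrating by parts in $\xi$ produces three groups of terms. When $\partial_\xi$ falls on $\chi(z,\cdot)$ I use $\partial_\xi\chi(z,\xi,t)=\delta(\xi)-\delta(\xi-u(z,t))$: the $\delta(\xi-u(z,t))$ piece is exactly the left-hand side of \eqref{eq:unpr2}, while the $\delta(\xi)$ piece, after integrating out $z$ and then $y$ using $\int_{\R^N}\rho^0=1$ twice, equals $-\int_{\R^N}m(y,0,t)\,dy$, which cancels the $|\chi|$-contribution above.

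The crux is that the remaining terms, in which $\partial_\xi$ falls on the product $\rho(y,z,\xi,t)\rho(y,x,\xi,t)$, vanish identically. Here I would use that in \eqref{eq:rho} the kernel $\rho=\rho^0\big(y-x+{\bf a}(\xi){\bf W}(t)\big)$ depends on $\xi$ and on $y$ only through the same argument, so that $\partial_\xi\rho=\sum_i a_i'(\xi)W^i(t)\,\partial_{y_i}\rho$; hence the relevant bracket equals $\sum_i a_i'(\xi)W^i(t)\,\partial_{y_i}\big[\rho(y,z,\xi,t)\rho(y,x,\xi,t)\big]$, and since neither $\chi(z,\cdot)$ nor $m(x,\cdot)$ depends on $y$, integrating in $y$ annihilates it by the divergence theorem (compact support of $\rho^0$). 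Assembling the three contributions then gives \eqref{eq:unpr2}.

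I expect the genuine difficulty to be analytic rather than algebraic. Since $m$ is only a bounded measure and $\chi$ is a step function in $\xi$, the integrations by parts in $\xi$ — in which $\partial_\xi\chi$ is a sum of Dirac masses and $m_\xi$ is tested against a function of bounded variation in $\xi$ — must be justified. This relies on the weak $\xi$-continuity of $m$ as a measure in $(x,t)$ recorded before Proposition~\ref{prop:reg}, which is what makes the slices $m(\cdot,0,t)$ and the evaluation of $m$ at $\xi=u(z,t)$ meaningful; rigorously one would mollify $m$ (or $\chi$) in $\xi$, carry out the computation above, and pass to the limit. The justification of the chain rule $\partial_t f^2=2f\,\partial_t f$, when $\partial_t f$ is only a $\xi$-measure, is of the same nature and would be treated by the same regularization.
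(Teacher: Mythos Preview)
Your argument is correct and follows essentially the same route as the paper: for \eqref{eq:unpr1} multiply the kinetic equation by $\sgn(\xi)$ and integrate by parts in $\xi$, and for \eqref{eq:unpr2} differentiate the square via \eqref{rho11}, integrate by parts in $\xi$, use $\partial_\xi\chi=\delta(\xi)-\delta(\xi-u)$, and kill the $\partial_\xi\rho$ terms by recognizing them as an exact $y$-derivative. The paper's proof is terser on the analytic justification (it simply invokes the Lipschitz-in-$\xi$ regularity of $m$ from Proposition~\ref{prop:kf} to make sense of $m(\cdot,0,\cdot)$), but the skeleton is identical.
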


\begin{proof}
The first identity is obtained (see  \cite{Peuniq, PeKF}) multiplying \fer{scl} by $\sgn (\xi)$ and using that $\sgn (\xi) \chi(x, \xi,t)= |\chi(x, \xi,t)|$. Notice that taking the value $\xi=0$ in  $m$ is allowed by the Lipschitz regularity in Proposition \ref{prop:kf}.

\smallskip

\noindent To prove \fer{eq:unpr2}, we use the regularization kernel along the characteristics \fer{eq:rho}. Indeed   \fer{rho11} and the fact that $ \chi_\xi(z,\xi, t) = \delta(\xi)-\delta(\xi - u(z,t))$
yield
\beq\label{eq}
\begin{array}{rl}
\f 1 2 \f{d}{dt}  \int_{\R^{N+1}} &\left( \int_{\R^{N}} \chi(x,\xi, t) \rho(y, x,\xi,t) dx\right)^2 dy d\xi
\\[2mm]
& =  \int_{\R^{N+1}} \left[ \int_{\R^{N}} \chi(z,\xi, t) \rho(y, z,\xi,t) dz \;  \int_{\R^{N}}  \rho(y, x,\xi,t) m_{\xi}(x,\xi,t) \ dx \right]dy  d\xi
\\[2mm]
&=-\int_{\R^{N+1} }  \int_{\R^{2N}} [\delta(\xi)-\delta(\xi - u(z,t))] \rho(y, z,\xi,t) \rho(y, x,\xi,t) \  m(x,\xi,t) dz  dx  dy   d\xi
\\[2mm]
&=-\int_{\R^{N}} m(x,0,t) dx \\[2mm]
& + \int_{\R^{N+1} }  \int_{\R^{2N}}  \delta(\xi-u(z,t)) \rho(y, z,\xi,t) \rho(y, x,\xi,t) \  m(x,\xi,t) dz  dx  dy  d\xi.
\end{array}
\eeq

\smallskip

\noindent An important step in the calculation above is that, for all $\xi \in \R$,
$$
\int_{\R^{N} }  \int_{\R^{2N}} \chi(z,\xi, t) [ D_y\rho(y, z,\xi,t)  \rho(y, x,\xi,t)  +\rho(y, z,\xi,t) D_y \rho(y, x,\xi,t)] \  m(t,x,\xi) dz  dx  dy  =0,
$$
which follows from the observation that the integrand is an exact derivative with respect to $y$.

\smallskip

\noindent Using  \fer{eq:unpr1} in \fer{eq} we get  \fer{eq:unpr2}.
\end{proof}

\smallskip

\noindent We conclude recalling the notion of dissipative solutions which were studied by Perthame and Souganidis \cite{ps1} and are equivalent to the entropy solutions. The interest in them is twofold. Firstly the definition resembles and enjoys the same flexibility as the one for viscosity solutions in, of course, the appropriate function space. Secondly in defining them, it is not necessary to talk at all about entropies, shocks, etc..

\smallskip

\noindent We say that $u \in L^\infty((0,T), (L^1 \cap L^\infty)(\R^N))$ is a dissipative solution of \eqref{scl},  provided that \eqref{path3} holds, if, for all $\Psi \in C([0,\infty),C^\infty_c(\R^N))$ and all $\psi \in C^{\infty,+}_c(\R)$, where the subscript $c$ means compactly supported, and in the sense of distributions,
\begin{equation}\label{dis1}
\frac{d}{dt} \int_{\R^N} \int_{\R} \psi(k) (u-k-\Psi)_+ dxdk \leq
\int_{\R^N} \int_{\R} \psi(k) \sgn_+(u-k-\Psi)(-\Psi_t - \sum_{i=1}^{N} (A_i(u))_{x_i} \circ dW^i) dxdk,
\end{equation}
where $(\cdot)_+$ and $\sgn_+$ denote respectively the positive part and its derivative, and

\smallskip

\noindent To provide an equivalent definition which will allow us to go around the difficulties with inequalities mentioned earlier in this section, we need to take a small detour to recall
the classical fact that, under our regularity assumptions on the flux and paths,
for any $\phi \in C^\infty_c(\R^N)$ and any $t_0>0$, there exists $h>0$, which depends on $\phi$, such that the problem

\begin{equation}\label{dis2}
\begin{cases}
d \bar{\Psi} + \displaystyle \sum_{i=1}^{N} (A_i(\bar{\Psi}))_{x_i} \circ dW^i = 0 \ \text{ in } \ \R^N\times(t_0-h,t_0+h), \\[2mm]
\bar{\Psi}=\phi \quad \text{ on } \  \R^N\times\{t_0\},
\end{cases}
\end{equation}
has a smooth solution given by the method of characteristics.

\smallskip

\noindent We leave it up to the reader to check that the definition of the dissipative solution is equivalent to saying that, for   $\phi \in C^\infty_c(\R^N)$,  $\psi \in C^{\infty,+}_c(\R)$  and any $t_0>0$, there exists $h>0$, which depends on $\phi$, such that in the sense of distributions
\begin{equation}\label{dis3}
\frac{d}{dt} \int_{\R^N} \int_{\R} \psi(k) (u-k-\bar{\Psi})_+ dxdk \leq 0 \ \text{ in } \ (t_0-h,t_0+h),
\end{equation}
where $\bar{\Psi}$ and $h>0$ are as in \eqref{dis2}.

%%%%%%%%%%%%%%%%%%%%%%%%%%%%%%%%%%%%%%%%%%%%
\section{Pathwise stochastic entropy  solutions}
\label{sec:wsol}
%-------------------------------------------
%%%%%%%%%%%%%%%%%%%%%%%%%%%%%%%%%%%%%%%%%%%%

Neither the notions of entropy and dissipative solutions nor the kinetic formulation can be used to study \fer{scl}, since both involve either inequalities or quantities with sign which do not make sense for equations/expressions with, in principle, are nowhere differentiable functions. We refer to \cite{LSsemilinear, LShjscras2000b, LShjscras98,LShjscras98b} for a general discussion about the difficulties encountered when attempting to use the classical weak solution approaches to study fully nonlinear stochastic pde.

\smallskip

\noindent  Motivated by the theory of stochastic viscosity solutions (\cite{LSsemilinear, LShjscras2000b, LShjscras98,LShjscras98b}) and \fer{rho11} we introduce next the notion of pathwise stochastic entropy solutions for SSCL. The key fact is the observation in the middle  of the previous section.

\smallskip

\noindent  The basic idea of \cite{LShjscras2000b, LShjscras98,LShjscras98b} is to invert locally the characteristics of the stochastic Hamilton-Jacobi equations to eliminate the stochastic part at the level of the test functions. In our context this is done using the $\rho$'s a fact which leads to \fer{rho11}, which does not have any stochastic terms.

\smallskip

\noindent  We have:

%----------------------------------
\begin{definition}
Assume \fer{flux} and \fer{path1}. Then $u\in (L^1\cap L^\infty)(\R^N \times (0,\infty))$ is a pathwise stochastic entropy solution to \fer{scl}, if there exists a nonnegative bounded measure $m$ on $\R^N\times \R \times (0,\infty)$ such that, for all test functions $\rho$ given by \fer{eq:rho} with $\rho^0$ satisfying \fer{rho0}, we have, in the sense of distributions in $\R \times (0,\infty)$,
\beq
\f d {dt} \int_{\R^{N}} \chi(x,\xi, t) \rho(y, x,\xi,t) dx  = \int_{\R^{N}}  \rho(y, x,\xi,t) m_{\xi}(x,\xi,t) dx.
\label{eq:conv}
\eeq
\label{def:sol}
\end{definition}

\smallskip

\noindent  The main result is:
%-------------------------------
\begin{theorem} Assume \fer{flux}, \fer{path1} and $ u^0 \in (L^1\cap L^{\infty})(\R^N)$. There exists a  unique patwise stochastic entropy solution $u \in C\big([0,\infty); L^1(\R^N)\big)\cap L^\infty (\R^N \times (0,T)) $, for all $T>0$,  to \fer{scl} and \fer{reg1}, \fer{reg2}, \fer{kf1}, \fer{kf2} and \fer{kf3} hold.
In addition any stochastic entropy solutions  $u_1, u_2\in C\big([0,\infty); L^1(\R^N)\big)$ to \fer{scl} satisfy, for all $t>0$, the ``contraction'' property
\beq\label{cont}
\| u_2(\cdot,t) -u_1(\cdot,t) \|_{L^1(\R^N)} \leq \| u_2^0-u_1^0 \|_{L^1(\R^N)}.
\eeq
Moreover there exists a uniform constant $C>0$ such that, if, for $i=1,2$, $u_i$ is the stochastic entropy solution of \fer{scl} with path ${\bf W_i}$ and $u_i^0\in BV(\R^N)$, then $u_1$ and $u_2$ satisfy, for all $t>0$, the ``contraction' property''
\beq \label{contraction}
\begin{array}{rl}
& \| u_2(\cdot,t) -u_1(\cdot,t) \|_{L^1(\R^N)} \leq \| u_2^0-u_1^0 \|_{L^1(\R^N)} + %\\[2mm]
 C [\|{\bf a}\|(|u^0_1|_{BV(\R^N)} + |u^0_2|_{BV(\R^N)})|({\bf W_1}-{\bf W_2})(t)| \\ [2mm]
& +(\sup_{s\in (0,t)}|({\bf W_1 -W_2})(s)| \|{\bf a'}\|
[ \| u^0_1\|^2_{L^2(\R^{N})} + \| u^0_2\|^2_{L^2(\R^{N})}])^{1/2}].
\end{array}
\eeq
\label{th:mainindep1}
\end{theorem}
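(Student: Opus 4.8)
The plan is to prove the three assertions in order: existence together with the quoted bounds, the $L^1$-contraction \eqref{cont} for a common path, and finally the quantitative estimate \eqref{contraction} for two distinct paths; uniqueness then follows from \eqref{cont} with $u_1^0=u_2^0$. For existence I would argue by approximation. Choose $\mathbf{W}_n\in C^1((0,\infty);\R^N)$ converging to $\mathbf{W}$ locally uniformly, and let $u_n$, $m_n$ be the classical entropy solution and kinetic defect measure for the smooth path $\mathbf{W}_n$; these satisfy \eqref{eq:conv} by virtue of \eqref{rho11}. Since the bounds \eqref{reg1}, \eqref{reg2} and \eqref{kf1}--\eqref{kf3} are independent of the path, $\{u_n\}$ is bounded in $L^\infty\cap BV$ and $\{m_n\}$ has uniformly bounded mass. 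The decisive point is that the kernels $\rho_n(y,x,\xi,t)=\rho^0(y-x+\mathbf{a}(\xi)\mathbf{W}_n(t))$ converge locally uniformly to $\rho$, so the identity \eqref{eq:conv} is stable under the limit; upgrading to convergence in $C([0,\infty);L^1(\R^N))$ by means of the contraction from the next step, I would extract $u_n\to u$ and $m_n\rightharpoonup m$ and check that $(u,m)$ satisfies Definition \ref{def:sol}, the listed estimates passing to the limit by lower semicontinuity.

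The core is a \emph{pathwise} doubling of variables carried out on the regularized kinetic functions $\chi_i^\rho(y,\xi,t):=\int_{\R^N}\chi_i(x,\xi,t)\rho(y,x,\xi,t)\,dx$, $i=1,2$, which obey \eqref{eq:conv} and hence contain no stochastic terms. I would differentiate the smoothed overlap $\int_{\R^{N+1}}\chi_1^\rho\chi_2^\rho\,dy\,d\xi$ in time, substitute \eqref{eq:conv} for each factor, and integrate by parts in $\xi$. The linchpin is the identity $\partial_\xi\rho=\sum_{i=1}^N a_i'(\xi)W^i(t)\,\partial_{y_i}\rho$, immediate from \eqref{eq:rho}: it converts every term in which $\partial_\xi$ falls on a kernel into $\sum_i a_i'(\xi)W^i(t)\,\partial_{y_i}$ of a product of kernels, that is, an exact $y$-derivative, which integrates to zero exactly as in the proof of Proposition \ref{prop:new}. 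What survives comes only from $\partial_\xi\chi_i=\delta(\xi)-\delta(\xi-u_i)$: the $\xi=0$ pieces reproduce, through \eqref{eq:unpr1}, the time derivatives of $\tfrac12\int|\chi_1|$ and $\tfrac12\int|\chi_2|$, while the $\delta(\xi-u_i)$ pieces are manifestly nonnegative. Assembling these gives $\tfrac{d}{dt}\int_{\R^{N+1}}[\tfrac12|\chi_1|+\tfrac12|\chi_2|-\chi_1^\rho\chi_2^\rho]\le 0$, and letting $\rho^0\to\delta$ (the common shift $\mathbf{a}(\xi)\mathbf{W}(t)$ cancels under the translation $x\mapsto x-\mathbf{a}(\xi)\mathbf{W}(t)$, so $\int\chi_1^\rho\chi_2^\rho\to\int\chi_1\chi_2$, while $\int_{\R}[\tfrac12|\chi_1|+\tfrac12|\chi_2|-\chi_1\chi_2]\,d\xi=\tfrac12|u_1-u_2|$) yields $\tfrac{d}{dt}\tfrac12\|u_1-u_2\|_{L^1(\R^N)}\le 0$, which is \eqref{cont}.

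For \eqref{contraction} I would rerun this computation with two distinct kernels $\rho_j(y,x,\xi,t)=\rho^0(y-x+\mathbf{a}(\xi)\mathbf{W}_j(t))$, and this is where the genuine difficulty lies. Now $\partial_\xi\rho_j=\sum_i a_i'(\xi)W_j^i(t)\,\partial_{y_i}\rho_j$ carries the path label, so the cross terms are no longer exact derivatives. Writing the offending term as $\mathbf{a}'\mathbf{W}_1\,\partial_y(\rho_1\rho_2)+\rho_1\,\mathbf{a}'(\mathbf{W}_2-\mathbf{W}_1)\,\partial_y\rho_2$, the first part remains an exact $y$-derivative and drops out, leaving a remainder proportional to $(\mathbf{W}_1-\mathbf{W}_2)$ and to $\|\mathbf{a}'\|$. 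Integrating this remainder once more by parts in $y$ moves the derivative onto a smoothed $\chi^\rho$, whose total $y$-variation is controlled by the $BV$ bound \eqref{reg2} and whose measure factor is controlled in mass by \eqref{kf1}; a Cauchy--Schwarz in the time--measure integral then produces the square-root term $(\sup_{s}|(\mathbf{W}_1-\mathbf{W}_2)(s)|\,\|\mathbf{a}'\|[\|u_1^0\|_{L^2}^2+\|u_2^0\|_{L^2}^2])^{1/2}$. The remaining contribution arises when recovering $\int\chi_1\chi_2$ from the differently shifted overlap at time $t$: the two shifts differ by $\mathbf{a}(\xi)(\mathbf{W}_1-\mathbf{W}_2)(t)$, and estimating this mismatch by the $BV$ norm gives the linear term $C\|\mathbf{a}\|(|u_1^0|_{BV}+|u_2^0|_{BV})|(\mathbf{W}_1-\mathbf{W}_2)(t)|$.

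The main obstacle throughout is the rigor of the $\xi$-integration by parts when $m$ is merely a measure --- which is precisely why the regularity of $\mathbf{a}$ in \eqref{flux} is needed, as flagged after \eqref{rho11}, so that $\partial_\xi\rho$ is continuous and pairs against $m$ --- together with controlling, using only the measure-theoretic bounds \eqref{kf1}--\eqref{kf3}, the imperfect cancellation in the two-path case so as to extract the exact $BV$ and $L^2$ dependence. I expect the extra $y$-integration by parts that trades the kernel derivative for a $BV$ bound on $\chi^\rho$, and the Cauchy--Schwarz that yields the $1/2$ power, to be the delicate technical heart of the argument.
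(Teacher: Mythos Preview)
Your existence argument and your treatment of the same-path contraction \eqref{cont} are essentially what the paper does. Working with the cross term $\int\chi_1^\rho\chi_2^\rho$ rather than the paper's squared difference $\int(\chi_2^{\rho_2}-\chi_1^{\rho_1})^2$ is an equivalent repackaging: expanding the square and using \eqref{eq:unpr2} on the diagonal terms reduces the paper's inequality to exactly your statement $\tfrac{d}{dt}\int[\tfrac12|\chi_1|+\tfrac12|\chi_2|-\chi_1^\rho\chi_2^\rho]\le 0$. Your identification of the exact $y$-derivatives via $\partial_\xi\rho=\mathbf{a}'\mathbf{W}\cdot D_y\rho$ is correct and is also the paper's mechanism.

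The genuine gap is in your derivation of \eqref{contraction}. The square root does \emph{not} come from a Cauchy--Schwarz argument, and the step ``integrating this remainder once more by parts in $y$ moves the derivative onto a smoothed $\chi^\rho$'' does not work as stated. After your (correct) decomposition, the remainder has the form
\[
\int \chi_2(z,\xi,t)\,\mathbf{a}'(\xi)\,\delta\mathbf{W}(t)\cdot D_y\rho_2(y,z,\xi,t)\,\rho_1(y,x,\xi,t)\,m_1(x,\xi,t)\,dz\,dx\,dy\,d\xi
\]
(plus a symmetric term). Integrating by parts in $y$ merely swaps the derivative onto $\rho_1$; converting $D_y\rho_2=-D_z\rho_2$ and integrating by parts in $z$ does put the derivative on $\chi_2$, but the resulting bound carries a factor $\|\rho^0\ast\check\rho^0\|_\infty$ that blows up like $\eta^{-N}$ as $\rho^0\to\delta$, so you cannot close. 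There is no Cauchy--Schwarz that extracts the exponent $1/2$ from this expression.

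What the paper actually does is bound the remainder crudely by $|\delta\mathbf{W}(t)|\,\|\mathbf{a}'\|_\infty\,\|D\rho^0\|_{L^1}\,\|\rho^0\|_{L^1}\int(m_1+m_2)$, \emph{keep} the dependence on $\rho^0$, and then introduce a scale by taking $\rho^0=\rho^\eta(\cdot)=\eta^{-N}\rho^0(\cdot/\eta)$. The remainder then scales like $\eta^{-1}\sup_s|\delta\mathbf{W}|\,\|\mathbf{a}'\|\,[\|u_1^0\|_{L^2}^2+\|u_2^0\|_{L^2}^2]$ after time integration and \eqref{kf1}. On the other hand, the error incurred when recovering $\|u_1-u_2\|_{L^1}$ from the smoothed quantity is $O(\eta)(|u_1^0|_{BV}+|u_2^0|_{BV})$ plus the $\eta$-independent shift term $\|\mathbf{a}\|\,|\delta\mathbf{W}(t)|\,|u^0|_{BV}$ that you correctly identified. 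The square root in \eqref{contraction} arises from \emph{optimising over} $\eta$, balancing the $O(\eta)$ and $O(\eta^{-1})$ contributions. That optimisation, not Cauchy--Schwarz, is the missing idea.
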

%--------------------------------

\smallskip

\noindent  We remark that, looking carefully into the proof of the \fer{contraction} for smooth paths, it is possible to establish, after some approximations, an estimate similar to \fer{contraction}, for non $BV$-data, with a rate that depends on the modulus of continuity in $L^1$ of the initial data. It is also possible to obtain an error estimate for different fluxes. We leave the details for both to the interested reader.

%\begin{theorem} Assume \fer{flux}, \fer{path1}, and $ u^0 \in (L^1\cap L^\infty\cap BV)(\R^N)$. There exists a  %unique stochastic entropy solution $u\in C\big([0,\infty); L^1(\R^N)\big)\cap (L^\infty \cap BV)\big(\R^N \times %(0,T)\big) $  to \fer{scl} which, in addition,  satisfies \fer{reg1}, \fer{reg2}, \fer{kf1}, \fer{kf2} and %\fer{kf3}. Finally if, for $i=1,2$, $u_i$ is the stochastic entropy solution of \fer{scl} with path ${\bf W^i}$, %then $u_1$ and $u_2$ satisfy the ``contraction' property''
%\beq \label{contraction}
%\| u_2(\cdot,t) -u_1(\cdot,t) \|_{L^1(\R^N)} \leq \| u_2^0-u_1^0 \|_{L^1(\R^N)} + C \left[ |({\bf W^2}-{\bf %W^1})(t)| +  \left(  \int_0^t |({\bf W^2}-{\bf W^1})(s)|ds \right)^{1/2} \right].
%\eeq
%\label{th:mainindep1}
%\end{theorem}

\smallskip

\noindent  We conclude stating the definition of the pathwise dissipative solution. We leave it to the reader to check that it is equivalent to the one of pathwise entropy solution.

\smallskip

\noindent  As in Section~3 we need to consider local time pathwise smooth in $x$ solutions of the SSCL
\eqref{dis2} which can be constructed, for each $\phi \in C^\infty_c(\R^N)$ and $t_0>0$, using the classical method of characteristics ---see  \cite{LShjscras2000b, LShjscras98,LShjscras98b}. Of course, here the $h$ is now ``random''.

\smallskip

\noindent  With this in mind we have:

\begin{definition}
Assume \eqref{flux} and \eqref{path1}. Then $u \in (L^\infty \cap L^\infty)(\R^N \times (0,\infty)))$ is a pathwise stochastic dissipative solution to \eqref{scl}, if \eqref{dis3} holds for any solution $\Psi$ of \eqref{dis2} given by the method of characteristics in $\R^N \times (t_0-h,t_0+h)$
for any $\phi \in C^\infty_c(\R^N)$ and $t_0>0$.
\end{definition}

%%%%%%%%%%%%%%%%%%%%%%%%%%%%%%%%%%%%%%%%%%%%
\section{Estimates for regular paths}
\label{sec:reg}
%-------------------------------------------
%%%%%%%%%%%%%%%%%%%%%%%%%%%%%%%%%%%%%%%%%%%%

Following ideas in \cite{LShjscras98, LShjscras98b}, we think of the solution operator of \fer{scl} as the unique extension of the  solution operators of \fer{scl} with regular paths. Therefore we begin the study of \fer{scl}
with smooth  paths. We obtain estimates that allow us to prove that the solutions corresponding to any regularization of the same path converge to the same limit which is a stochastic entropy solution. Then we prove an intrinsic uniqueness for the latter.

\smallskip

\noindent  The key step is a new estimate, which depends only on the sup-norm of ${\bf W}$ and yields
compactness with respect to time.

\smallskip

\noindent  We have:

%----------------------------------
\begin{theorem} Assume \fer{flux} and, for $i=1,2$, $u^0_i\in (L^1\cap L^\infty \cap BV)(\R^N)$. Consider two smooth paths  ${\bf W_1}$  and ${\bf W_2}$ and the corresponding solutions $u_1$ and $u_2$ to \fer{scl}.  There exists a uniform constant  $C>0$ such that,
 for all $t>0$, \fer{contraction} holds.
\label{th:timeindep}
\end{theorem}
%----------------------------------
\begin{proof} We combine the uniqueness proof for scalar conservation laws  based on the kinetic formulation of \cite{Peuniq, PeKF} and the regularization method along the characteristics introduced for Hamilton-Jacobi equations in \cite{LSsemilinear, LShjscras2000b, LShjscras98,LShjscras98b}.

\smallskip

\noindent  To this end we fix $\rho^0$ satisfying \fer{rho0}, we consider the kernels $\rho_1$ and $\rho_2$ corresponding to the paths ${\bf W_1}$  and ${\bf W_2}$, and we write  \fer{eq:conv} for $(\chi_1, \rho_1, m_1)$ and $(\chi_2,\rho_2,m_2)$. After  subtracting  the two equations we find
$$\begin{array}{rl}
 \f d {dt} \int_{\R^{N}} &[ \chi_2(x,\xi, t) \rho_2(y, x,\xi,t) -\chi_1(x,\xi, t) \rho_1(y, x,\xi,t) ]dx =
\\[2mm]
& \int_{\R^{N}} [\rho_2(y,x,\xi,t)m_{2,\xi}(x,\xi,t) -\rho_1(y,x,\xi,t) m_{1,\xi}(x,\xi,t)]dx .
\end{array}$$

\smallskip

\noindent  Multiplying the above expression by  $\int_{\R^{N}} [ \chi_2(x,\xi, t) \rho_2(y, x,\xi,t) - \chi_1(x,\xi, t) \rho_1(y, x,\xi,t) ]dx $ and integrating  in $y$ and $\xi$ we obtain
\beq  \begin{array}{rl}
\f 1 2 \f{d}{dt}  \int_{\R^{N+1}} & \left| \int_{\R^{N}} [ \chi_2(x,\xi, t) \rho_2(y, x,\xi,t) -\chi_1(x,\xi, t) \rho_1(y, x,\xi,t) ] dx \right|^2 dy d\xi
\\[2mm]
&=  \int_{\R^{N+1} }  \int_{\R^{2N}} [ \chi_2(z,\xi, t) \rho_2(y, z,\xi,t) - \chi_1(z,\xi, t) \rho_1(y, z,\xi,t) ] \;
\\[2mm]
&\qquad  [ \rho_2(y, x,\xi,t) m_{2,\xi}(x,\xi,t) -  \rho_1(y, x,\xi,t) m_{1,\xi}(x,\xi,t) ] dz  dx  dy  \ d\xi .
\end{array}
\label{eq:est1}
\eeq

\smallskip

\noindent  The next step is to integrate by parts with respect to $\xi$ in \fer{eq:est1}. This gives rise to  several terms depending on whether the $\xi$ derivative hits either $\chi_1(x,\xi,t)$ and $\chi_2(x,\xi,t)$ or $\rho_2(y,z,\xi,t)$, $\rho_1(y,z,\xi,t)$, $\rho_2(y,x,\xi,t)$ and $\rho_2(y,x,\xi,t)$. We denote the expressions involving these two groups of derivatives by ${\bf I}$ and ${\bf II}$ respectively and we analyze them  separately.

\smallskip

\noindent  We begin with the first term where the integration by parts hits the $\chi$-terms. We find
$$\begin{array}{rl}
{\bf I} &=-\int_{\R^{N+1} }  \int_{\R^{2N}} [ \chi_{2,\xi}(z,\xi, t)\ \rho_2(y, z,\xi,t) - \ \chi_{1,\xi}(z,\xi, t) \rho_1(y, z,\xi,t) ] \;
\\[2mm]
&\qquad  [ \rho_2(y, x,\xi,t) \  m_2(x,\xi,t) -  \rho_1(y, x,\xi,t) \ m_1(x,\xi,t) ] dz  dx  dy   d\xi
\\[2mm]
&=\int_{\R^{N+1} }  \int_{\R^{2N}} [\delta (\xi -u_2(z,t)) \rho_2(y, z,\xi,t) - \delta (\xi - u_1(z,t)) \rho_1(y, z,\xi,t) ] \;
\\[2mm]
&\qquad  [ \rho_2(y, x,\xi,t) \  m_2(x,\xi,t) -  \rho_1(y, x,\xi,t) \ m_1(x,\xi,t) ] dz  dx  dy   d\xi ,
\end{array}$$
since the term containing the Dirac masses at $\xi=0$ vanishes because
$$
\int_{\R^{N} }  [ \rho_2(y, z,0,t) - \rho_1(y, z,0,t) ]dz =0.
$$

\smallskip

\noindent  Observe that, since $\rho^0 \geq0$, the two cross-terms containing $\delta (\xi - u_i(z,t)) \; m_j(x,\xi,t)$ with $i\neq j$ are non-positive.

\smallskip

\noindent  Therefore we end up  with
$$\begin{array}{rl}
{\bf I} &\leq  \int_{\R^{N+1} }  \int_{\R^{2N}} [\delta (\xi -u_2(z,t)) \rho_2(y, z,\xi,t)  \rho_2(y, x,\xi,t) \ m_2(x,\xi,t)
\\[2mm]
&\qquad \qquad + \delta (\xi -u_1(z,t)) \rho_1(y, z,\xi,t)  \rho_1(y, x,\xi,t) \  m_1(x,\xi,t)] dz  dx dy  d\xi,
\end{array}$$
and, in view of \fer{eq:unpr2},
\beq\label{I}
\begin{array}{rl}
{\bf I} \leq& \ \ \f 1 2 \f{d}{dt}  \int_{\R^{N+1}}\left[\left( \int_{\R^{N}} \chi_1(x,\xi, t) \rho_1(y, x,\xi,t) dx\right)^2 -|\chi_1(y,\xi, t)| \right] dy   d\xi \\[2mm]
& + \f 1 2 \f{d}{dt}  \int_{\R^{N+1}}\left[\left( \int_{\R^{N}} \chi_2(x,\xi, t) \rho_2(y, x,\xi,t) dx\right)^2 -|\chi_2(y,\xi, t)| \right] dy  d\xi.
\end{array}
\eeq

\smallskip

\noindent  We continue with the other term where the integrations by parts hit the $\rho$-terms. Recalling \fer{rho12} we have
$$\begin{array}{rl}
{\bf II} &=-\int_{\R^{N+1} } \int_{\R^{2N}} [ \chi_2(z,\xi, t)({\bf a}'(\xi){\bf W_2}(t)) \cdot D \rho_2(y, z,\xi,t) - \chi_1(z,\xi, t) ({\bf a}'(\xi){\bf W_1}(t)) \cdot D \rho_1(y, z,\xi,t) ] \;
\\[2mm]
&\qquad \qquad  [ \rho_2(y, x,\xi,t) \  m_2(x,\xi,t) -  \rho_1(y, x,\xi,t) \ m_1(x,\xi,t) ] dz  dx  dy  d\xi
\\[2mm]
&\quad -\int_{\R^{N+1} } \int_{\R^{2N}} [ \chi_2(z,\xi, t) \rho_2(y, z,\xi,t) - \chi_1(z,\xi, t)  \rho_1(y, z,\xi,t) ] \;
\\[2mm]
&\quad  \quad [({\bf a}'(\xi){\bf W_2}(t)) \cdot D\rho_2(y, x,\xi,t) \  m_2(x,\xi,t) - ({\bf a}'(\xi){\bf W_1}(t)) \cdot D \rho_1(y, x,\xi,t) \ m_1(x,\xi,t) ] dz  dx  dy   d\xi .
\end{array}$$

\smallskip

\noindent  To simplify the representation next we use the notation
$$
\bar{\bf W}= \f{{\bf W^2} + {\bf W^1}}{2}, \quad {\bf {\delta W} }= \f{ {\bf W^2} -  {\bf W_1}}{2} , \quad
 {\bf W^1} = \bar{{\bf W}} +  {\bf {\delta W}} \quad \text{ and } \quad {\bf W^2}= \bar{{\bf W}} -  {\bf {\delta  W}},
$$
write $ {\bf II}= {\bf IIa}+ {\bf IIb},$
and analyze each term separately.

\smallskip

\noindent  We begin with
$$\begin{array}{rl}
{\bf IIa} &=
-\int_{\R^{N+1} } {\bf a'}(\xi) {\bf {\delta W}}(t) \cdot \int_{\R^{2N}} [ \chi_2(z,\xi, t) D \rho_2(y, z,\xi,t) +\chi_1(z,\xi, t) D \rho_1(y, z,\xi,t) ]\;
\\[2mm]
&\qquad \qquad  [ \rho_2(y, x,\xi,t) \  m_2(x,\xi,t) -  \rho_1(y, x,\xi,t) \ m_1(x,\xi,t) ] dz  dx  dy   d\xi
\\[2mm]
&\quad -\int_{\R^{N+1} } {\bf a'}(\xi) {\bf {\delta W}}(t) \cdot \int_{\R^{2N}} [ \chi_2(z,\xi, t) \rho_2(y, z,\xi,t) - \chi_1(z,\xi, t)  \rho_1(y, z,\xi,t) ] \;
\\[2mm]
&\qquad  \qquad [D \rho_2(y, x,\xi,t) \  m_2(x,\xi,t) + D \rho_1(y, x,\xi,t) \ m_1(x,\xi,t) ] dz  dx  dy  d\xi ,
\end{array}$$
which can be reorganized as
$$\begin{array}{rl}
{\bf IIa} &= \int_{\R^{3N+1} } {\bf a'}(\xi) {\bf {\delta W}}(t) \cdot
\\[2mm]
& \big[- \chi_2(z,\xi, t) [D \rho_2(y, z,\xi,t)  \rho_2(y, x,\xi,t)+ \rho_2(y, z,\xi,t)  D \rho_2(y, x,\xi,t) ]  m_2(x,\xi,t)
\\[2mm]
&+  \chi_1(z,\xi, t) [D \rho_1(y, z,\xi,t)  \rho_1(y, x,\xi,t)+ \rho_1(y, z,\xi,t)  D \rho_1(y, x,\xi,t) ]  m_1(x,\xi,t)
\\[2mm]
& + \chi_2(z,\xi, t) [D \rho_2(y, z,\xi,t) \rho_1(y, x,\xi,t)-  \rho_2(y, z,\xi,t) D \rho_1(y, x,\xi,t) ]m_1(x,\xi,t)
\\[2mm]
&- \chi_1(z,\xi, t) [D \rho_1(y, z,\xi,t) \rho_2(y, x,\xi,t) \\[2mm]
& - \rho_1(y, z,\xi,t) D \rho_2(y, x,\xi,t) ]m_2(x,\xi,t)  \big]dz  dx dy  d\xi.
\end{array}$$

\smallskip

\noindent  Interpreting the $D$ as a derivative in $y$, for $i=1,2$, we have
$$
\int_{y\in \R^N} [D_y \rho_i(y, z,\xi,t)  \rho_i(y, x,\xi,t)+ \rho_i(y, z,\xi,t) D_y \rho_i(y, x,\xi,t) ]dy =0,
$$

\smallskip

\noindent  and, for the same reason, after integrating by parts in the two remaining lines, we get
$$
\begin{array}{rl}
{\bf IIa} & =
 2 \int_{\R^{3N+1} } {\bf {a'}}(\xi){\bf {\delta W}}(t)\cdot \big[ \chi_2(z,\xi, t) D_y \rho_2(y, z,\xi,t) \rho_1(y, x,\xi,t)  m_1(x,\xi,t)
\\[2mm]
&+ \chi_1(z,\xi, t) D_y \rho_1(y, z,\xi,t) \rho_2(y, x,\xi,t)m_2(x,\xi,t)]\ dx  dy dz d\xi.
\end{array}
$$
\smallskip

\noindent Thus
\beq\label{IIa}
|{\bf IIa}| \leq 2|{\bf {\delta W}}(t)|\;  \|{\bf a'}\|_\infty \; \| D \rho^0 \|_1 \; \| \rho^0 \|_1  \; \int_{\R^{N+1}} \big(m_1(x,\xi,t)+m_2(x,\xi,t)\big) dx  d\xi .
\eeq

\smallskip

\noindent  We turn next to
$$\begin{array}{rl}
{\bf IIb} &=- \int_{\R^{N+1} } {\bf a'}(\xi)\bar {{\bf W}}(t) \cdot\int_{\R^{2N}} [ \chi_2(z,\xi, t) D \rho_2(y, z,\xi,t) - \chi_1(z,\xi, t) D \rho_1(y, z,\xi,t) ] \;
\\[2mm]
&\qquad \qquad  [ \rho_2(y, x,\xi,t) \  m_2(x,\xi,t) -  \rho_1(y, x,\xi,t) \ m_1(x,\xi,t) ] dz \ dx \ dy  \ d\xi
\\[2mm]
&\quad - \int_ {\R^{N+1} } {\bf a'}(\xi)\bar {{\bf W}}(t) \cdot \int_{\R^{2N}} [ \chi_2(z,\xi, t) \rho_2(y, z,\xi,t) - \chi_1(z,\xi, t)  \rho_1(y, z,\xi,t) ] \;
\\[2mm]
&\qquad  \qquad [ D \rho_2(y, x,\xi,t) \  m_2(x,\xi,t) -  D \rho_1(y, x,\xi,t) \ m_1(x,\xi,t) ] dz  dx  dy   d\xi .
\end{array}$$

\smallskip

\noindent  As before the integrals involving  $\chi_2 m_2$ and  $\chi_1 m_1$ vanish because they are exact $y$-derivatives. In addition the terms involving
$\chi_2 m_1$ and $\chi_1 m_2$ cancel after integration by parts in $y$. Hence we conclude that
\beq\label{IIb}
{\bf IIb } =0.
\eeq

\smallskip

\noindent  Combining \fer{eq:est1}, \fer{I}, \fer{IIa} and \fer {IIb} we arrive at the estimate
\beq  \begin{array}{rl}
\f{d}{dt}  \int_{\R^{N+1}} & \left( \int_{\R^{N}} \big[ \chi_2(x,\xi, t) \rho_2(y, x,\xi,t) -\chi_1(x,\xi, t) \rho_1(y, x,\xi,t) \big] dx \right)^2 dy  d\xi \leq
\\[2mm]
& \f{d}{dt}  \int_{\R^{N+1}}\left[\left( \int_{\R^{N}} \chi_1(x,\xi, t) \rho_1(y, x,\xi,t) dx\right)^2 -|\chi_1(y,\xi, t)| \right] dy  d\xi
\\[2mm]
& + \f{d}{dt}  \int_{\R^{N+1}}\left[\left( \int_{\R^{N}} \chi_2(x,\xi, t) \rho_2(y, x,\xi,t) dx\right)^2 -|\chi_2(y,\xi, t)| \right] dy   d\xi
\\[2mm]
& + 4|{\bf \delta W}(t)|\;  \|{\bf a'}\|_\infty \; \|D \rho \|_1 \; \| D \rho^0 \|_1 \;  \int_{\R^{N+1}}, \big(m_1(x,\xi,t)+m_2(x,\xi,t)\big) dx d\xi ,
\end{array}
\label{eq:est2}
\eeq
which is the fundamental inequality that allows us to control the differences between  $u_1$ and $u_2$.

\smallskip

\noindent  It remains to ``eliminate'' the convolution terms in \fer{eq:est2} in order to prove  \eqref{contraction}. To this end,
we use the families of $(\rho^{\eta})_{\eta>0}$ and $(\rho_i^\eta)_{\eta>0}$  given by
$$
\rho^{\eta}(z)= \eta^{-N} \rho^0 ({\eta}^{-1}z) \quad \text{ and } \quad  \rho_i^\eta (y,x,\xi,t)= \rho^{\eta}\left(y-x- {\bf a}(\xi) {\bf W^i}(t)\right),
$$

\smallskip

\noindent  The  $BV$ bounds on $u_1^0$ and $u_2^0$ yield the existence of  a uniform constant $C>0$ such that, for $i=1,2$ and all $t>0$,
$$  \begin{array}{l}
\left| \int_{\R^{N+1}} \left[\left( \int_{\R^{N}} \chi_i(x,\xi, t) \rho^\eta_i(y, x,\xi,t) dx\right)^2 -|\chi_i(y,\xi, t)| \right] dy   d\xi \right|
\\[2mm]
\qquad = \left| \int_{\R^{N+1}}\left[\left( \int_{\R^{N}} \chi_i(x,\xi, t) \rho^{\eta}(y-x) dx\right)^2 -(\chi_i(y,\xi, t))^2 \right] dy   d\xi \right|
\\[2mm] \qquad \leq C |u^0_i|_{BV(\R^N)} \; \eta.
\end{array}
$$

\smallskip

\noindent  For some other uniform constant $C_2>0$, which depends only on
the flux, we also have
$$  \begin{array}{rl}
&\int_{\R^{N+1}}  \left( \int_{\R^{N}} \big[ \chi_2(x,\xi, t) \rho_2^\eta(y, x,\xi,t) -\chi_1(x,\xi, t) \rho_1^\eta (y, x,\xi,t) \big] dx \right)^2 dy  d\xi
\\[2mm]
& =\int_{\R^{N+1}}  \left( \int_{\R^{N}} \big[ \chi_2(x,\xi, t) \rho^\eta(y- x) -\chi_1(x,\xi, t) \rho^\eta(y- x-{\bf a}(\xi){\bf \delta W}(t)) \big] dx \right)^2 dy  d\xi
\\[2mm]
& =\int_{\R^{N+1}} [ \int_{\R^N} ([\chi_2(x,\xi, t)-\chi_2(y,\xi, t)] \rho^\eta(y- x) - [\chi_1(x,\xi, t)-\chi_1(y,\xi, t)]  \rho^\eta(y- x-{\bf a}(\xi){\bf  \delta W}(t)) \\[2mm]
& + [\chi_2(y,\xi,t)\rho^\eta(y-x) - \chi_1(y,\xi, t) \rho^\eta(y- x-{\bf a}(\xi){\bf  \delta W}(t))])dx ]^2 \ dy d\xi
\\[2mm]
&\geq \int_{\R^{N+1}}  \left| \chi_2(y,\xi, t)- \chi_1(y,\xi, t)] \right|^2 dy d\xi
\\[2mm]
& \qquad \qquad -C \int_{\R^{N+1}} \int_{\R^{N}} |\chi_2(x,\xi, t)-\chi_2(y,\xi, t)| \rho^\eta(y- x) dx dy d\xi
\\[2mm]
& \qquad \qquad -C \int_{\R^{N+1}} \int_{\R^{N}} |\chi_1(x,\xi, t)-\chi_1(y,\xi, t)| \rho^\eta(y- x-{\bf a}(\xi) {\bf \delta W} (t)) dx dy  d\xi .
\end{array}
$$

\smallskip

\noindent  A straightforward integration yields
$$
\int_{\R^{N+1}}  \left| \chi_2(y,\xi, t)- \chi_1(y,\xi, t) \right|^2 dy  d\xi = \| (u_1-u_2)(t)\|_{L^1(\R^N)},
$$
while, by a standard convolution estimate, we have
$$
\int_{\R^{N+1}} \int_{\R^{N}} |\chi_2(x,\xi, t)-\chi_2(y,\xi, t)| \rho^\eta(y- x) dx dy  d\xi \leq C\eta \ |u^0_2|_{BV(\R^N)},
$$
and
$$  \begin{array}{rl}
& \int_{\R^{N+1}} ( \int_{\R^{N}} |\chi_1(x,\xi, t)-\chi_1(y,\xi, t)| \rho^\eta(y- x-{\bf a}(\xi) {\bf  \delta W} (t)) dx dy  d\xi
 \\[2mm]
& = \int_{\R^{N+1}}  \int_{\R^{N}} |\chi_1\big(y- \eta z -{\bf a}(\xi) {\bf \delta W}(t),\xi, t\big)-\chi_1(y,\xi, t)| \rho(z) dz dy  d\xi
 \\[2mm]
& \leq C (\eta +\|{\bf a} \|_\infty  |{\bf  \delta W}(t)|)\  |u^0_1|_{BV(\R^N)} .
\end{array}
$$

\smallskip

\noindent  Finally it is immediate that
$$
\begin{array}{rl}
&\int_{\R^{N+1}}  \left( \int_{\R^{N}} \big[ \chi_2(x,\xi, 0) \rho_2^\eta(y, x,\xi,0) -\chi_1(x,\xi, t) \rho_1^\eta (y, x,\xi,0) \big] dx \right)^2 dy  d\xi
\\[2mm]
& = \int_{\R^{N+1}}  \left( \int_{\R^{N}}( \chi_2(x,\xi, 0) -\chi_1(x,\xi,0))\rho ^\eta(y -x )dx \right)^2 dy  d\xi
\\[2mm]
& \leq \|(u_1-u_2)(0)\|_{L^1(\R^N)}
\end{array}
$$

\smallskip

\noindent Integrating \fer{eq:est2} with respect to $t$ and using the previous estimates as well as \fer{kf1} we find,
for some uniform constant $C>0$ which may change from line to line,

\begin{equation}\label{last}
\begin{array}{rl}
& \| (u_1-u_2)(t)\|_{L^1(\R^N)}  \leq  \|(u_1-u_2)(0)\|_{L^1(\R^N)} \\[2mm]
& + C [ (\eta + \|{\bf a}\| |{\bf \delta W} (t)| )  (|u^0_1|_{BV(\R^N)} + |u^0_2|_{BV(\R^N)})\\[2mm]
& + \eta^{-1} \|{\bf a'}\| \int_0^t |{\bf \delta W}(s)|  \int_{\R^{N+1}}, \big(m_1(x,\xi,s)+m_2(x,\xi,s)\big) dx d\xi ]\\[2mm]
& \leq \| (u_1-u_2)(t)\|_{L^1(\R^N)}  \leq  \|(u_1-u_2)(0)\|_{L^1(\R^N)} + C [ (\eta + \|{\bf a}\| |{\bf \delta W} (t)| )  (|u^0_1|_{BV(\R^N)}  + |u^0_2|_{BV(\R^N)}) +  \\[2mm]
& (\sup_{s\in(0,t)}|{\bf \delta W}|) \|{\bf a'}\|
[ \| u^0_1\|^2_{L^2(\R^{N})} + \| u^0_2\|^2_{L^2(\R^{N})}] ].
\end{array}
\end{equation}

\smallskip

\noindent  Choosing  $\eta^2 =\sup_{s\in(0,t)}|{\bf \delta W}| \|{\bf a'}\|
[ \| u^0_1\|^2_{
L^2(\R^{N})} + \| u^0_2\|^2_{L^2(\R^{N})}]$ completes the proof of \fer{contraction}.

\end{proof}

%%%%%%%%%%%%%%%%%%%%%%%%%%%%%%%%%%%%%%%%%%%%
\section{The proof of Theorem \ref{th:mainindep1}}
\label{sec:conc}
%-------------------------------------------
%%%%%%%%%%%%%%%%%%%%%%%%%%%%%%%%%%%%%%%%%%%%

In this section we present the

\begin{proof}[The proof of Theorem \ref{th:mainindep1}]
The existence of a stochastic entropy solution follows easily. Indeed, the estimate of Theorem \ref{th:timeindep} implies that, for every $u^0 \in (L^1 \cap L^\infty \cap BV)(\R^N)$ and for every $T>0$, the mapping
$$
{\bf W} \in C([0, T];\R^N) \mapsto u \in C\big([0,T]; L^1(\R^N)\big)
$$
is well defined and uniformly continuous with the respect to the norm of  $C([0,T];\R^N)$. Therefore it has a unique extension to $C([0, T])$ by density. Passing to the limit we recover the ``contraction'' properties \fer{cont} and  \fer{contraction} as well as \fer{def:sol}. Once \fer{cont} is available for initial data in $BV(\R^N)$, the extension to general data is immediate by density.

\smallskip

\noindent Next we show that stochastic entropy solutions satisfying \fer{def:sol} are intrinsically unique in a stronger sense. The contraction property only proves uniqueness of the solution built by the above regularization process but we can prove that property \eqref{eq:conv} ensures uniqueness. Indeed, for $BV$-data the estimates performed in the proof of Theorem \ref{th:timeindep} only use the equality of Definition \ref{def:sol}. From there the only nonlinear manipulation performed is to say that
$$
\begin{array}{rl}
 \f 1 2 \f{d}{dt}&  \int_{\R^{N+1}} \left( \int_{\R^{N}} \chi(x,\xi, t) \rho(y, x,\xi,t) dx\right)^2
\\[2mm]
&=   \int_{\R^{N+1}} \left( \int_{\R^{N}} \chi(x,\xi, t) \rho(y, x,\xi,t) dx\right)
\;  \f{d}{dt}  \int_{\R^{N+1}} \left( \int_{\R^{N}} \chi(x,\xi, t) \rho(y, x,\xi,t) dx\right).
\end{array}
$$

\smallskip

\noindent  This is well justified after time regularization by convolution because we have assumed that solutions belong to  $C\big([0,T); L^1(\R^N)\big)$ for all $T>0$. This fact also allows to justify that the right hand side
$$
  \int_{\R^{N+1}} \left( \int_{\R^{N}} \chi(x,\xi, t) \rho(y, x,\xi,t) dx\right) \ \int_{\R^{N+1}} \int_{\R^{N}} \chi(z,\xi, t) \rho(y, z,\xi,t) dz \;  \int_{\R^{N}}  \rho(y, x,\xi,t) m_{\xi}(x,\xi,t) \ dx
$$
\smallskip

\noindent  \smallskip

\noindent can be analyzed by usual integration by parts because it is possible to add a convolution in $\xi$ before forming the square. All these technicalities are standard and have been detailed in \cite {Peuniq, PeKF}. The uniqueness for general data requires one more layer of approximation.
\end{proof}

%%%%%%%%%%%%%%%%%%%%%%%%%%%%%%%%%%%%

\section{The semilinear problem }
\label{sec:semilinear}

%%%%%%%%%%%%%%%%%%%%%%%%%%%%%%%%%%%%%%%%%%%%%%%%%%%%%%%%%%%%%%%%%%

It is natural to expect that the approach developed earlier will also be applicable to the semilinear problem \fer{scl2} to yield a pathwise theory of stochastic entropy solutions. It turns out however, as we explain below, that this not the case.

\smallskip

\noindent To keep things simple here we assume that ${\bf W} =t$ and $\bf{ \tilde W}={ \tilde W} \in  C([0,\infty);\R)$ is a single rough path, and consider, for $\Phi \in C^2(\R;\R)$, the problem
\begin{equation}\label{scl3}
\begin{cases}
du + \dv {\bf A}(u)dt = \Phi (u) \circ dW \ \text{ in } \ \R^N\times(0,\infty), \\[2mm]
u=u^0 \ \text{ on } \ \R^N\times\{0\}.
\end{cases}
\end{equation}
%for  %${\bf \Phi}=(\Phi_1,...,\Phi_m) \in C^2(\R;\R^m)$, ${\bf {\tilde W}}=({\tilde W}_1,...,{\tilde W}_m) \in
%$\Phi \in C^2(\R;\R)$ and $W \in C([0,\infty);\R)$.

\smallskip

\noindent  Following the earlier considerations as well as analogous problem for Hamilton-Jacobi equations (\cite{LSsemilinear} we assume that, for each $v\in\R$ and $T>0$,  the initial value problem
\beq\label{ivp}
\begin{cases}
d\Psi ={\bf \Phi}(\Psi) \circ d{\tilde W} \ \text{ in } \ (0,\infty),\\[2mm]
\Psi(0)=v,
\end{cases}
\eeq
admits a unique solution
\beq\label{ivp1}
\Psi(v;\cdot)\in C([0,T];\R) \quad \text{ such that, for all $t\in [0,T]$,} \quad  \Psi( \cdot,t)\in C^1(\R;\R).
\eeq

%Of course, if ${\bf {\tilde W}}$ is an m-Brownian motion
%and ${\bf \Phi} \in C^2(\R;\R^m)$, then \fer{ivp} admits such a solution. In other cases it may be necessary to %invoke other kinds of solutions like, for example, the ones coming from the theory of rough paths---see %\cite{lyons}, \cite{fritz}.

\smallskip

\noindent  According to  \cite{LSsemilinear}, to study \fer{scl3} it is natural to consider a change of unknown given by the Doss-Sussman-type transformation
\beq\label{ds}
u(x,t):=\Psi(v(x,t),t).
\eeq

\smallskip

\noindent  Assuming for a moment that $ {\tilde W}$ and, hence, $\Psi$ are smooth with respect to $t$
and \eqref{scl3} and \fer{ivp} have classical solutions we find, after a straightforward calculation, that

\beq\label{scl5}
\begin{cases}
v_t + \dv {\bf {\tilde A}} (v,t) = 0 \ \text{ in } \R^N \times (0,T), \\[2mm]
v=u^0 \quad \text{ on } \quad \R^N\times\{0\},
\end{cases}
\eeq
where $ {\bf {\tilde A}} \in C^{1,0}(\R\times [0,T])$ is given by
\beq\label{newA}
{\bf {\tilde A}'}(v,t) = {\bf A'}(\Psi(v,t)).
\eeq

\smallskip

\noindent  Under the above assumptions on the flux and the forcing term, the theory of entropy solutions of scalar conservation applies to \fer{scl5} and yields the existence of a unique entropy solution.

\smallskip

\noindent  Hence, exactly as in \cite{LSsemilinear}, it is tempting to define $u\in (L^1\cap L^\infty)(\R^N \times (0,T))$, for all $T>0$, to be a pathwise stochastic entropy solution of \eqref{scl3} if $v\in (L^1\cap L^\infty)(\R^N \times (0,T))$ defined, for all $T>0$, by \fer{ds} is an entropy solution of \fer{scl5}.

\smallskip

\noindent  We show next that this is not the case. We begin explaining the difficulty which is best seen
%We have
%----------------------------------
%\begin{definition}
%Assume \fer{ivp}. $u\in (L^1\cap L^\infty)(\R^N \times (0,T))$ for all $T>0$ is a stochastic entropy solution to %\fer{scl3} if $v\in (L^1\cap L^\infty)(\R^N \times (0,T))$ defined, for all $T>0$, by \fer{ds} is an entropy %solution of \fer{scl4}.
%\label{def:sol1}
%\end{definition}
when adding a small viscosity $\nu$ to \eqref{scl3}, and, hence, considering the approximate equation
$$
u_t + \dv {\bf A}(u)= {\bf \Phi}(u) \circ dW + \nu \Delta u ,
$$
and, after the transformation \eqref{ds}, the problem
\begin{align*}
v_t + {\bf a}\big( \Psi(v(x,t),t)\big) .\nabla v &=   \f{\nu}{\Psi_v(v(x,t),t)} \Delta \Psi(v(x,t),t)
\\ \\
&= \nu \Delta v + \nu (\f{\Psi_{vv}}{\Psi_v})(v(x,t),t) |\nabla v|^2 .
\end{align*}
%{\bf Benoit: We never a stated a theorem like that for the solutions i.e. the limit of viscosity approximations?}

\smallskip

\noindent  If the approach based on \eqref{ds} were correct, one would expect to get, after letting $\nu \to 0$, (rigorously) \eqref{scl5}. This, however, does not seem to be the case due to the lack of the necessary a priori bounds to pass in the limit.

\smallskip

\noindent  The problem is, however, not just a technicality but something deeper. Indeed  the transformation \eqref{ds} does not, in general, preserve the shocks unless, as an easy calculation shows, the forcing is linear.

\smallskip

\noindent  Indeed assume that $N=1$ and  $W(t)=t$ and consider the semilinear Burger's equation
\begin{equation}\label{sclsemi}
\begin {cases}
u_t  + \f  12 (u^2)_x   =  \Phi(u)  \  \text{ in } \ \R \times(0,\infty), \\[2mm]
%u^0= \begin{cases}
%            1 \qquad \text{for }Êx < 0 , \\[2mm]
%            0 \qquad \text{for }Êx > 0 ,\\[2mm]
%        \end{cases}
u^0= 1 \ \text{ if } \ x<0 \ \text{ and} \ 0 \ \text{ if } \ x>0,
\end{cases}
\end{equation}
with $ \Phi$ such that
\begin{equation}\label{phi}
 \Phi(0)=0, \quad  \Phi(1)=0, \ \text{ and } \   \Phi(u) >0  \ \text{ for } \ u \in (0,1).
\end{equation}

\smallskip

\noindent  It is easily seen that the entropy solution of  \eqref{sclsemi} is
$$
u(x,t)= \begin{cases}
            1 \ \text{for } \ x < t/2 , \\
            0 \ \text{for } \ x > t/2 .
        \end{cases}
$$

\smallskip

\noindent  Next we perform the transformation $u = \Psi(v,t)$ with
$$
\dot  \Psi(v;t) = \Phi(\Psi(v;t)), \qquad \Psi(v;0) =v.
$$

\smallskip

\noindent  Since, in view of \eqref{phi},
$$
\Psi(0;t)=\Psi(1;t) \equiv 1, \ \text{ and } \ \Psi(v;t)>v \ \text{ for } \ v \in (0,1),
$$
we find that the flux for the equation for $v$ is
$$
\widetilde A(v,t) =  \int_0^v \Psi(w;t) dw,
$$
and the entropy solution with initial data $u^0$ is $v(x,t)= H(x- \bar x (t))$ with the Rankine-Hugoniot condition
$$
\dot{\bar x}(t) = \int_0^1 \Psi(w;t) dw >  \int_0^1 w dw = \f 12.
$$

\smallskip

\noindent  It is, therefore, clear that the shock waves are not preserved.

\smallskip

\noindent  We conclude with  another reason why it is more natural to consider contractions in $L^1(\R^N \times \Omega)$ instead of 
$L^1(\R^N)$ a.s. in $\omega$ for \eqref{scl2}. To fix the ideas we take ${\bf A}=0$ and we consider the initial value problem
(stochastic de) \begin{equation}\label{scl10}
\begin{cases}
du = \Phi(u) \circ dW \ \text{ in } \ (0,\infty), \\[2mm]
u(0)=u^0 .
\end{cases}
\end{equation}

\smallskip

\noindent  It $u_1$, $u_2$ are solutions to \eqref{scl10} with initial data
$u^0_2, u^0_2$ respectively, then, subtracting the two equations, multiplying by $\text{sign}_{\pm}(u_1-u_2)$, taking expectations and using Ito-calculus, we find, for some $C>0$ depending on bounds on $\Phi$ and its derivatives,
$$    
E \int |u_1(\cdot,t)-u_2(\cdot,t)|\leq \exp(Ct) E \int |u^0_1-u^0_2|,
$$
while it is not possible, in general, to get an almost sure inequality on $\int_|u_1(\cdot,t;\omega)-u_2(\cdot,t,\omega)|$.

%%%%%% BIBLIO %%%%%%%%%%%%%%%%%%%%%%

\bigskip
\bigskip

\noindent ($^{1}$) Universit\'e Paris-Dauphine, \\
 place du Mar\'echal-de-Lattre-de-Tassigny, \\
 75775 Paris cedex 16, France \\
email: lions@ceremade.dauphine.fr
\\ \\
\noindent ($^{2}$) Universit\'e Pierre et Marie Curie, Paris06,\\
      CNRS UMR 7598 Laboratoire J.-L. Lions, BC187,\\
       4, place Jussieu,  F-75252 Paris 5\\ 
       and INRIA Paris-Rocquencourt, EPC Bang \\
email: benoit.perthame@upmc.fr
\\ \\
($^{3}$)  Department of Mathematics \\
             University of Chicago \\
             Chicago, IL 60637, USA \\
            email: souganidis@math.uchicago.edu
\\ \\
($^{4}$)  Partially supported by the National Science Foundation.


\begin{thebibliography}{}

\end{thebibliography}


\begin{thebibliography}{10}

%\bibitem{ALD} A.-L.~Dalibard, Kinetic formulation for heterogeneous scalar conservation laws,  Ann. I. H. Poincar\'e Analyse Nonlineaire {\bf 23} (2006), 475--498.

\bibliographystyle{siam}
%\bibliography{SSCL}
%\begin{thebibliography}{10}

%\bibitem{bd} F.~Bouchut and L.~Desvillettes Averaging lemmas without time Fourier transform and application to %discretized kinetic equations, Proceedings of the Royal Society of Edinburgh, 129A, 19-36 (1999).

%\bibitem{fritz} M.~Caruana, P. Fritz and H. Oberhauser, A (rough) pathwise approach to fully non-linear stochastic %partial differential equations. Preprint arXiv:0902.3352vI (2009).

\bibitem{cdk}
G.-Q.~Chen, Q.~Ding and K.~H.~Karlsen, Existence and continuous dependence of BV
solutions to multidimensional conservation laws with random source, in preparation.

\bibitem{DafBook}
C.~M.~Dafermos, Hyperbolic conservation laws in continuum physics,
Grundlehren der Mathematischen Wissenschaften [Fundamental
Principles of Mathematical Sciences], 325, Springer-Verlag, Berlin, 2005


\bibitem{DHV} A.~Debussche, M.~Hofmanova and J.~Vovelle, Degenerate parabolic stochastic partial differential equations: Quasilinear case, preprint (2013).

\bibitem{dv}
A.~Debussche and J.~Vovelle, Scalar conservation laws with stochastic
forcing. Journal of Functional Analysis, 259, 1014--1042 (2010).

\bibitem{fn}
J.~Feng and D.~Nualart, Stochastic scalar conservation laws, J. Funct. Anal., 255 (2008), 313--373.

\bibitem{GLPS} F.~Golse, P.-L.~Lions, B.~Perthame and R.~Sentis,
Regularity of the
moments of the solution of a transport equation, J. Funct. Anal.
{\bf 76} (1988), 110--125.

\bibitem{Hof1} M.~Hofmanova, Bhatnagar-Gross-Krook approximation to stochastic scalar conservation laws, preprint (2013).

\bibitem{Hof2} M.~Hofmanova, Degenerate parabolic stochastic partial differential equations, Stoch. Pr. Ap. 123 (12) (2013) 4294--4336.

\bibitem{ll1} J.-M.~Lasry and P.-L.~Lions, Jeux des champ moyen. I. Le cas stationnaire, C. R.
Math. Acad. Sci. Paris {\bf 343} (2006), no. 9, 619--625.

\bibitem{ll2} J.-M.~Lasry and P.-L.~Lions, Jeux des champ moyen. II. Horizon fini et
controle optimal, C. R. Math. Acad. Sci. Paris {\bf 343} (2006), no. 10, 679--684.

\bibitem{ll3} J.-M.~Lasry and P.-L.~Lions, Mean field games, Japan J. Math. 2 (2007), no 1, 229--260.

\bibitem{LPS}
P.-L.~Lions, B.~Perthame and P.~E.~Souganidis,  Existence of entropy
solutions for the hyperbolic systems of isentropic gas dynamics in
Eulerian and Lagrangian coordinates, Comm. Pure Appl. Math. {\bf 49}(1996), no. 6, 
599--638.

\bibitem{LPSscl}, P.-L.~Lions, B.~Perthame and P.~E.~Souganidis,  Scalar conservation laws with rough (stochastic) fluxes; $x$-dependent case, work in preparation.

\bibitem{LPSstav}, P.-L.~Lions, B.~Perthame and P.~E.~Souganidis,  Stochastic averaging lemmas for kinetic equations, S\'eminaire \'Equations Aux D\'eriv\'ees Partielles, 2011-2012, no. 1., \'Ecole Polytech., Palaiseau, 2012.

\bibitem{LPTscal}
P.-L.~Lions, B.~Perthame and E.~Tadmor,
A kinetic formulation of multidimensional scalar conservation
laws and related equations, J. Amer. Math. Soc. {\bf 7} (1994),  no. 1, 169--191.



\bibitem{LShjscras98}
P.-L.~Lions and P.~E.~Souganidis, Fully nonlinear stochastic partial differential equations,
C. R. Acad. Sci. Paris S\'er. I Math. {\bf 326} (1998), no. 9,1085--1092.

	
\bibitem{LShjscras98b}
P.-L.~Lions and P.~E.~Souganidis, Fully nonlinear stochastic partial differential equations:
non-smooth equations and applications, C. R. Acad. Sci. Paris S\'er. I Math. {\bf 327} (1998), no. 8, 735--741.

				
\bibitem{LShjscrasEDP}
P.-L.~Lions and P.~E.~Souganidis,   \'{E}quations aux d\'eriv\'ees partielles stochastiques
nonlin\'eaires et solutions de viscosit\'e, Seminaire: \'Equations aux D\'eriv\'ees Partielles,
1998--1999, Exp. No. I, 15,
\'Ecole Polytech., Palaiseau, 1999.

%\bibitem{LShjscras2000}
%P.-L.~Lions and P.~E.~Souganidis, ?????

\bibitem{LShjscras2000b}
P.-L.~Lions and P.~E.~Souganidis,  Uniqueness of weak solutions of fully nonlinear stochastic
partial differential equations, C. R. Acad. Sci. Paris S\'er. I Math. {\bf 331} (2000), no. 10, 783--790.

\bibitem{LSsemilinear}
P.-L.~Lions and P.~E.~Souganidis, Fully nonlinear stochastic pde with semilinear stochastic dependence, C. R. Acad. Sci. Paris S\'er. I Math. {\bf 332} (2000), no. 8, 617---624.


\bibitem{LShjs}
P.-L.~Lions and P.~E.~Souganidis,
Viscosity solutions of fully nonlinear stochastic partial differential equations,
Viscosity solutions of differential equations and related topics (Japanese) Kyoto, 2001,
S\=urikaisekikenky\=usho K\=oky\=uroku,
1287, 2002, 58--65.

\bibitem{LSbook}
P.-L.~Lions and P.~E.~Souganidis, Stochastic viscosity solutions, a book  (in preparation).

\bibitem{lyons}
T.~ Lyons, Systems controlled by rough paths, 		
European Congress of Mathematics, 269--281,
Eur. Math. Soc., Z\"urich, 2005.


%\bibitem{lcl}
%T.~ Lyons, M.~Caruana and T. L{\'e}vy,
%Differential equations driven by rough paths,
%Lecture Notes in Mathematics,
%1908, Lectures from the 34th Summer School on Probability Theory
%held in Saint-Flour, July 6--24, 2004. With an introduction concerning the Summer School by Jean
% Picard,  Springer, Berlin, 2007.


\bibitem{Peuniq}
B.~Perthame, Uniqueness and error estimates in first order quasilinear
conservation laws via the kinetic entropy defect measure,
J. Math. Pures Appl. {\bf 9} (1998), no. 10, 1055--1064.


\bibitem{PeKF}
B.~Perthame, Kinetic formulation of conservation laws,
Oxford Lecture Series in Mathematics and its Applications,
21, Oxford University Press, Oxford, 2002.

\bibitem{ps1} B.~Perthame and P.~E.~Souganidis, Dissipative and entropy solutions to
non-isotropic degenerate parabolic balance laws. Arch. Ration. Mech. Anal. {\bf 170} (2003), no. 4, 359--370. Addendum to: "Dissipative and entropy
solutions to non-isotropic degenerate parabolic balance laws'' [Arch. Ration. Mech. Anal. 170 (2003), no. 4, 359--370; MR2022136]. Arch. Ration. Mech. Anal. {\bf 174} (2004), no. 3, 443--447.


%bibitem{ps2} B.~Perthame and P.~E.~Souganidis, Addendum to: "Dissipative and entropy
%solutions to non-isotropic degenerate parabolic balance laws'' [Arch. Ration. Mech. Anal. 170 (2003), no. 4, 359--370; MR2022136]. Arch. Ration. %Mech. Anal. 174 (2004), no. 3, 443–447.


\bibitem{PTscal}
B.~Perthame and E.~Tadmor, A kinetic equation with kinetic entropy functions for scalar
conservation laws, Comm. Math. Phys. {\bf 136} (1991), no. 3, 501--517.

\bibitem{SerreBook}
D.~Serre, Systems of conservation laws, Hyperbolicity, entropies, shock waves,
translated from the 1996 French original by I. N. Sneddon,
Cambridge University Press, Cambridge, 1999



\end{thebibliography}
\end{document}